\numberwithin{equation}{section}
\numberwithin{table}{section}
\numberwithin{figure}{section}
\newtheorem{corollary}{Corollary}[section]
\newtheorem{lemma}{Lemma}[section]
\newtheorem{remark}{Remark}[section]
\newtheorem{example}{Example}[section]
\newcommand{\IR}{{\mathbb{R}}}
\newcommand{\mO}{{\mathcal{O}}}
\newcommand{\tT}{\intercal}
\journal{Elervier}
\begin{document}

\begin{frontmatter}



\title{A preconditioner based on sine transform for two-dimensional Riesz space fractional diffusion equations in convex domains}
\author{Xin Huang \fnref{inad2}}
\ead{hxin.ning@qq.com}
 \author{Hai-Wei Sun \corref{cor2}\fnref{inad2}}
 \ead{HSun@um.edu.mo}
 \cortext[cor2]{Corresponding author.}
 \address[inad2]{Department of Mathematics, University of Macau,
Macao.}



\begin{abstract}
In this paper, we develop a fast numerical method for solving the time-dependent Riesz space fractional diffusion equations with a nonlinear source term  in the convex domain. An implicit finite difference method is employed to discretize the Riesz space fractional diffusion equations with a penalty term in a rectangular region by the volume-penalization approach. The stability and the convergence of the proposed method are studied. As the coefficient matrix is with the Toeplitz-like structure, the generalized minimum residual method with a preconditioner based on the sine transform is exploited to solve the discretized linear system, where the preconditioner is constructed in view of the combination of two approximate inverse ${\tau}$ matrices, which can be diagonalized by the sine transform. The spectrum of the preconditioned matrix is also investigated.
 Numerical experiments are carried out to demonstrate the efficiency of the proposed method.
\end{abstract}

\begin{keyword}
Riesz fractional derivative \sep Toeplitz matrix \sep
sine transform based preconditioner\sep GMRES method \sep penalization
\MSC[2010] 65F08 \sep 65F10 \sep 65N22
\end{keyword}

\end{frontmatter}

\section{Introduction}\label{sec:int}
Consider the following two-dimensional Riesz space fractional diffusion equations (RSFDEs) defined in convex domains with the homogenous Dirichlet boundary condition \cite{CLTA-ANM-2018}
\begin{align}\label{model_equation}
& \frac{\partial u}{\partial t}=k_x\frac{\partial ^{\alpha_1}u}{\partial|x|^{\alpha_1}}+
k_y\frac{\partial ^{\alpha_2}u}{\partial|y|^{\alpha_2}}+f(u,x,y,t), \ && (x,y,t)\in\Omega\times (0,T], \\ \label{initial}
& u(x,y,0)=u_0(x,y), \ && (x,y)\in \Omega, \\ \label{boundary}
&u(x,y,t)=0, \ && (x,y,t)\in \partial\Omega\times (0,T],
\end{align}
where $\Omega\in\IR^2$ is a convex region whose left and right boundaries are $a_1(y)$ and $b_1(y)$, while the lower and upper boundaries are $a_2(x)$ and $b_2(x)$, respectively, $k_x$ and $k_y$ are positive constants representing the diffusivity coefficients, the nonlinear source term $f(u,x,y,t)$ is supposed to hold the Lipschitz condition with respect to $u$ and $t$, the Riesz fractional derivatives $\frac{\partial ^{\alpha_1}u}{\partial|x|^{\alpha_1}}$ and $\frac{\partial ^{\alpha_2}u}{\partial|y|^{\alpha_2}}$ concerning to ${\alpha}_i\ (1<\alpha_i<2,\, i=1,2)$ are described by
$$
\frac{\partial ^{\alpha_1}u}{\partial|x|^{\alpha_1}}=c_{\alpha_1}
\left(_{a_1(y)}{\rm D}_{x}^{\alpha_1}u+_x{\rm D}_{b_1(y)}^{\alpha_1}u\right),\quad
\frac{\partial ^{\alpha_2}u}{\partial|y|^{\alpha_2}}=c_{\alpha_2}
\left(_{a_2(x)}{\rm D}_{y}^{\alpha_2}u+_y{\rm D}_{b_2(x)}^{\alpha_2}u\right),
$$
where $c_{\alpha_i}=-\frac{1}{2\cos(\alpha_i\pi/2)}>0$,  and the above left and right Riemann-Liouville fractional derivatives are depicted as
$$
_{a_1(y)}{\rm D}_{x}^{\alpha_1}u(x,y,t)=\frac{1}{\Gamma(2-\alpha_1)}\frac{\partial^2}{\partial x^2}\int_{a_1(y)}^{x}\frac{u(s,y,t)}{(x-s)^{\alpha_1-1}}{\rm d}s,
$$
$$
_x{\rm D}_{b_1(y)}^{\alpha_1}u(x,y,t)=\frac{1}{\Gamma(2-\alpha_1)}\frac{\partial^2}{\partial x^2}\int_{x}^{b_1(y)}\frac{u(s,y,t)}{(s-x)^{\alpha_1-1}}{\rm d}s.
$$
Analogously, $_{a_2(x)}{\rm D}_{y}^{\alpha_2}u$ and $ _y{\rm D}_{b_2(x)}^{\alpha_2}u$ can be defined in the same way.

Fractional differential equations provide plenty of efficient and powerful models for describing complex phenomena arising in various fields such as science, engineering, and physics; see \cite{BWM-WRS-2000,CLZ-PP-2001,GDM-JMAA-2008}. The Riesz fractional diffusion equation as one of the most popular fractional differential equations has received tremendous attention on practical applications, including the nonlocal dynamics \cite{Tv-HEP-2010}, the lattice model \cite{DZ-CMA-2012}, the FizHugh-Nagumo model \cite{FR-BJ-1961} and so on. The potential significance of studying Riesz fractional diffusion equations has attracted lots of researchers to study.

 Since the analytical solutions of fractional differential equations are usually unavailable, thus, it becomes a major research direction to seek the numerical solutions that has been intensively developed; see \cite{LAT-JCAM-2004,MST-JCP-2006,MT-ANM-2006}.
 Because of the Riesz fractional operator being with the nonlocal characteristics, the resulting coefficient matrix stemming from the numerical discretization is usually dense. Therefore, it needs $\mO(N^3)$ computational cost and $\mO(N^2)$ storage requirement to invert by the direct solvers, where $N$ denotes the number of unknowns. To overcome those demerits, numerous strategies are proposed for fast solving fractional diffusion equations with cheaper memory; see \cite{LS-JCP-2013,PS-JCP-2012,WWS-JCP-2010}.

However, if the fractional diffusion equations are defined in  convex domains, the resulting coefficient matrices are no longer with tensor forms. Hence the methods  that are available for the rectangular domains may not be extended  to solve the   problems in the convex domains. In 2018, Chen et al. in \cite{CLTA-ANM-2018} employed the alternating direction implicit (ADI) method to solve the RSFDEs \eqref{model_equation}--\eqref{boundary}  in the convex domains. In their work, the resulting coefficient matrix is consist of Toeplitz blocks  with different sizes. The preconditioner conjugate gradient (PCG) method with a circulant preconditioner is exploited to solve the   Toeplitz block system. Likewise, Jia and Wang \cite{JW-CMA-2018} adopted the same way to solve the distributed-order space-fractional diffusion equation defined on convex domains; see \cite{FL-AML-2018,LNS-JCP-2017}. Generally speaking, despite the total amount of calculation arising from the problem in convex domain is consistent in the order of magnitude comparing with the one of rectangular domains, its computational complexity   is more than that of the cases in  the rectangular domains.

To avoid the inconvenience of storing and inverting the coefficient matrices with different Toeplitz blocks, more effective and robust approaches are urgently desired. Jia and Wang \cite{JW-JCP-2016} firstly applied the volume-penalization method  \cite{ABF-NM-1999} to solve the fractional differential equations defined in convex domains. More precisely, they enlarge the   convex domain to   a rectangular domain, and thereby the resulting coefficient matrix is equivalent to the sum of an accurate tensor form of a Toeplitz-like matrix and a diagonal matrix which derives from the penalty term. As a result, the methods designed for the problems in rectangular domains are available for that in the convex domains. As a matter of fact, the essence of the volume-penalization is transforming the problem in the convex domain to   a more general problem. Numerically speaking,  for many problems, the solutions of the penalized fractional differential equations converge to the solutions of the original fractional differential equations, and such  facts have been confirmed in \cite{ABF-NM-1999,CF-ADE-2013,KNS-ANM-2015}. Consequently, the preconditioning techniques have been developed for those penalized systems; see  \cite{DSW-CAM-2019,CDL-JSC-2019}.

In this paper, we study the RSFDEs defined in convex domains combining with the volume-penalization strategy. We discretize the model equations with the penalty term on a rectangular region via an implicit finite difference method, which has been proved to be unconditionally stable and with first order in temporal and spatial direction. Note that the resulting coefficient matrix is the sum of a diagonal matrix,  whose entries equal to $1$ or $0$, and an exact tensor form of Toeplitz-like matrix. Therefore, the coefficient matrix can be stored  in $\mO(N)$ memory and the complexity of matrix-vector multiplication is of $\mO(N\log N)$ by the fast Fourier transformation (FFT). Thus, when the Krylov subspace method is exploited to solve the discretized linear system, the computational cost per iteration can keep $\mO(N\log N)$ operations. In order to speed up the convergent rate of the iterative method, constructing an efficient and feasible preconditioner is necessary and significant. To this end, we establish two step approximations to construct the preconditioner. Firstly, the sine transform based preconditioner, which is also called $\tau$ matrix that can be diagonalized by the discrete sine transform \cite{BB-Conference-1990,LFS-2020}, is applied to approximate the Toeplitz matrix. Afterward, we obtain a preliminary preconditioner   that the spectrum of the preconditioned matrix is proved to be uniformly bounded in the open interval $(1/2,3/2)$.
 Nevertheless, such preliminary preconditioner cannot be diagonalized by the sine transform matrix as its diagonal is not constant. In other words,   the inverse of the preconditioner cannot be calculated in $\mO(N\log N)$ operations. Therefore, we resort the strategy that has been adopted in \cite{CDL-JSC-2019,DSW-CAM-2019} to overcome this weakness. The second step to construct an efficient preconditioner is   the combination of two inverse $\tau$ matrices to approximate the previous preconditioner. The spectra of the preconditioned matrix are also analysed. Numerical results fully exhibit the efficiency of the proposed method.

The rest of the paper is organized as follows. In Section \ref{FDM}, an implicit finite difference method is employed to discretize the RSFDEs in the generalized rectangular domain. In Section \ref{S_AND_C}, the convergence and the stability of the difference scheme are studied. A preconditioner based on the sine transform is constructed in Section \ref{implement} and the spectra of the preconditioned matrix are discussed as well. In Section \ref{numerical results}, numerical results are reported to demonstrate the effectiveness of the proposed method. Concluding remarks are given in Section \ref{conclusion}.

\section{Finite difference discretization}\label{FDM}
To seek the numerical solution of the problem (\ref{model_equation})--\eqref{boundary}, we  extend the convex domain to a rectangular one that follows the idea  as shown in \cite{DSW-CAM-2019}.

Suppose that the domain ${\Omega}$ is contained in a rectangular domain $\bar{\Omega}=(a,b)\times(c,d)\supset\Omega$. Then, we reformulate the problem \eqref{model_equation}--\eqref{boundary} to be the following equations
\begin{align}\label{M1}
& \frac{\partial u_{\eta}}{\partial t}-k_x\frac{\partial ^{\alpha_1}u_{\eta}}{\partial|x|^{\alpha_1}}-
k_y\frac{\partial ^{\alpha_2}u_{\eta}}{\partial|y|^{\alpha_2}}+\frac{1-1_{\Omega}(x,y)}{\eta}u_{\eta}=\hat{f}(u_{\eta},x,y,t), \ && (x,y,t)\in\bar\Omega\times (0,T], \\ \label{I1}
& u_{\eta}(x,y,0)=\hat{u}_0(x,y), \ && (x,y)\in \bar\Omega, \\ \label{B1}
&u_{\eta}(x,y,t)=0, \ && (x,y,t)\in \partial\bar\Omega\times (0,T],
\end{align}
where $1_{\Omega}(x,y)$ is an indicator function satisfying $1_{\Omega}(x,y)=1$ if $(x,y)\in{\Omega}$, or $0$ elsewhere, $\hat f(u_\eta,x,y,t)$ and $\hat u_0(x,y)$ are the extensions of the source term $f(u,x,y,t)$ and the initial value $u_0(x,y)$ from $\Omega$ to $\bar\Omega$, respectively. For convenience, zero extensions for $\hat f(u_\eta,x,y,t)$ are used in the theoretical analysis and numerical experiments. Hence, the RSFDEs in \eqref{model_equation} will reduce to \eqref{M1} if $(x,y)\in\Omega$.
Moreover, the solution $u_{\eta}$ is supposed to satisfy the homogeneous Dirichlet boundary condition on the  extended region $\bar\Omega\setminus{\Omega}$; i.e.,
$$
{\lim_{\eta\to0^{+}}}u_{\eta}(x,y,t)=0.
$$
Indeed, it is have been proved that the solution of the penalized equation will converge to the solution of the original equation for many problems; see \cite{ABF-NM-1999,KNS-ANM-2015}. Therefore, in the following, we focus on the discretization of the penalized equations.

Let $n_1$, $n_2$, $m$ be positive integers. Denote $h_x=\frac{b-a}{n_1+1}$ and $h_y=\frac{d-c}{n_2+1}$ be the mesh sizes in $x$ direction and $y$ direction. We further define uniform spatial partitions as
$x_i=a+ih_x\ {\rm for}\ i=0,\dots,n_1+1$ and $ y_j=c+jh_y\ {\rm for}\ j=0,\dots,n_2+1$, respectively. Let $\Delta t=\frac{T}{m}$ and $t_k=k{\Delta}t$ for $k=0,\dots,m$.
In order to discretize the equation \eqref{M1}, we assume that the problem \eqref{M1}--\eqref{B1} is uniquely solvable and the solution $u_\eta(x,y,t)$ is sufficiently smooth on $\bar\Omega$.
Then, applying the shifted Gr$\ddot{\rm u}$nwald Letnikov difference scheme to approximate the left and right Riemann-Liouville fractional derivatives with respect to $x$ at grid point $(x_i,y_j,t_k)$, we obtain
\begin{align}
&_{a}{\rm D}_x^{\alpha_1}u_\eta(x_i,y_j,t_k)=\frac{1}{h_x^{\alpha_1}}\sum_{l=0}^{i+1}g_{l}^{(\alpha_1)}
u_\eta(x_{i-l+1},y_j,t_k)+\mO(h_x),\label{left-RL} \\
&_{x}{\rm D}_b^{\alpha_1}u_\eta(x_i,y_j,t_k)=\frac{1}{h_x^{\alpha_1}}\sum_{l=0}^{n_1-i+2}g_{l}^{(\alpha_1)}
u_\eta(x_{i+l-1},y_j,t_k)+\mO(h_x), \label{right-RL}
\end{align}
where the coefficients $g_{l}^{(\alpha_1)}$ are defined by
\begin{equation}\label{g_form}
g_0^{(\alpha_1)}=1, \quad g_{l}^{(\alpha_1)}=\left(1-\frac{\alpha_1+1}{l}\right)g_{l-1}^{(\alpha_1)}, \ {\rm for}\ l\ge1.
\end{equation}
Similarly, the above approximation results hold for the Riesz fractional derivative in $y$ direction.

Next, we consider the discretization of the time derivative term. The backward Euler difference scheme is exploited to approximate the time derivative as follows
\begin{equation}\label{time-discretization}
\frac{\partial u_\eta(x_i,y_j,t_k)}{\partial t}=\frac{u_\eta(x_i,y_j,t_k)-u_\eta(x_i,y_j,t_{k-1})}{\Delta t}+\mO(\Delta t).
\end{equation}
According to the above assumption and \eqref{time-discretization}, we derive that there exists a constant $c_1$ such that $u_\eta(x,y,t)$ satisfying
$$
|u_\eta(x_i,y_j,t_k)-u_\eta(x_i,y_j,t_{k-1})|\leq c_1\Delta t.
$$
For dealing with the nonlinear term, noting that $\hat f(u_{\eta},x,y,t)$ satisfies the Lipschitz condition for $u_{\eta}$ and $t$ in $\Omega$ and making using of the zeros extensions on the extended region, we derive that, for arbitrary $u_1$ and $u_2$, there exists a constant $c_2>0$ subjecting to
$$
|\hat f(u_1,x,y,t)-\hat f(u_2,x,y,t)|<c_2|u_1-u_2|.
$$
Likewise, denote $c_3>0$ be the Lipschitz constant concerning $t$. Then, it holds that
$$
|\hat f(u_\eta,x,y,t_1)-\hat f(u_\eta,x,y,t_2)|\leq c_3|t_1-t_2|.
$$
Thus, for $1\le i\le n_1$, $1\le j\le n_2$ and $1\le k\le m$, we have
\begin{align*}
&|\hat f(u_\eta(x_i,y_j,t_k),x_i,y_j,t_k)-\hat f(u_\eta(x_i,y_j,t_{k-1}),x_i,y_j,t_{k-1})| \\
\leq &|\hat f(u_\eta(x_i,y_j,t_k),x_i,y_j,t_k)-\hat f(u_\eta(x_i,y_j,t_k),x_i,y_j,t_{k-1})|\\
&+|\hat f(u_\eta(x_i,y_j,t_k),x_i,y_j,t_{k-1})-\hat f(u_\eta(x_i,y_j,t_{k-1}),x_i,y_j,t_{k-1})| \\
\leq &c_3|t_k-t_{k-1}|+c_2|u_\eta(x_i,y_j,t_k)-u_\eta(x_i,y_j,t_{k-1})| \\
\leq &c_3\Delta t+c_1c_2\Delta t \\
=&(c_3+c_1c_2)\Delta t,
\end{align*}
from which we obtain an approximation for the nonlinear source term $\hat f(u_\eta,x,y,t)$; i.e.,
\begin{equation}\label{f_approximation}
\hat f(u_\eta(x_i,y_j,t_k),x_i,t_j,t_k)=\hat f(u_\eta(x_i,y_j,t_{k-1}),x_i,y_j,t_{k-1})+\mO(\Delta t).
\end{equation}

Denote $c_x=\frac{\Delta tk_xc_{\alpha_1}}{h_x^{\alpha_1}}>0$, $c_y=\frac{\Delta t k_yc_{\alpha_2}}{h_y^{\alpha_2}}>0$.
By applying (\ref{left-RL}), (\ref{right-RL}), (\ref{time-discretization}) and \eqref{f_approximation}  to (\ref{M1}), we obtain the following expression{\small
\begin{equation}\label{approximation}
\begin{split}
&u_\eta(x_i,y_j,t_k)-c_x\left(\sum_{l=0}^{i+1}g_{l}^{(\alpha_1)}u_\eta(x_{i-l+1},y_j,t_k)+
\sum_{l=0}^{n_1-i+2}g_{l}^{(\alpha_1)}u_\eta(x_{i+l-1},y_j,t_k)\right) \\
&-c_y\left(\sum_{l=0}^{j+1}g_{l}^{(\alpha_2)}u_\eta(x_i,y_{j-l+1},t_k)+
\sum_{l=0}^{n_2-j+2}g_{l}^{(\alpha_2)}u_\eta(x_i,y_{j+l-1},t_k)\right)+{\Delta t}\frac{1-1_{\Omega}(x,y)}{\eta}u_\eta(x_i,y_j,t_k)\\
=&u_\eta(x_i,y_j,t_{k-1})+{\Delta t}\hat f(u_\eta(x_i,y_j,t_{k-1}),x_i,y_j,t_{k-1})+{\Delta t}r_{ij}^{k},
\end{split}
\end{equation}}
where there exists a constant $c_0$ such that
\begin{equation}\label{residual}
|r_{ij}^{k}|<c_0(h_x+h_y+\Delta t), \ 1\le i\le n_1, \ 1\le j\le n_2, \ 1\le k\le m.
\end{equation}

Define the penalization coefficients $d_{i,j}=0$ for $(x_i,y_j)\in\Omega$ and $d_{i,j}=\frac{\Delta t}{\eta}$ for $(x_i,y_j)\in\bar\Omega\setminus\Omega$. Denoting $f_{i,j}^{k}=\hat f(u_\eta(x_i,y_j,t_k),x_i,y_j,t_k)$, setting $u_{i,j}^{k}$ as the numerical approximation of $u_\eta(x_i,y_j,t_k)$ and omitting the small term $r_{i,j}^{k}$, we can construct the difference scheme for solving (\ref{M1}) with the initial and boundary conditions of \eqref{I1} and \eqref{B1} as following
{\small
 \begin{flalign}
&u_{i,j}^{k}-c_x\left(\sum_{l=0}^{i+1}g_{l}^{(\alpha_1)}u_{i-l+1,j}^k+
\sum_{l=0}^{n_1-i+2}g_{l}^{(\alpha_1)}u_{i+l-1,j}^k\right)-c_y\left(\sum_{l=0}^{j+1}g_{l}^{(\alpha_2)}u_{i,j-l+1}^{k}
+\sum_{l=0}^{n_2-j+2}g_{l}^{(\alpha_2)}u_{i,j+l-1}^k\right)& \nonumber\\  \label{difference_scheme}
&+d_{i,j}u_{i,j}^{k}=u_{i,j}^{k-1}+{\Delta t}f_{i,j}^{k-1},\quad 1\le i\le n_1, \ 1\le j\le n_2,\ 1\le k\le m, & \\ \label{I2}
&u_{i,j}^{0}=\hat u_{0}(x_i,y_j), \quad 0\leq i\leq n_1+1, \quad 0\leq j\leq n_2+1,& \\ \label{B2}
&u_{0,j}^{k}=u_{n_1+1,j}^{k}=u_{i,0}^{k}=u_{i,n_2+1}^{k}=0, \quad 1\le i\le n_1, \ 1\le j\le n_2,\ 1\leq k\leq m.&
\end{flalign}
}

Let $I$ be the identity matrix with an appropriate size and $N=n_1n_2$. Denote
$$ u^{k}=[u_{1,1}^k,\dots,u_{n_1,1}^k,u_{1,2}^k,\dots,u_{n_1,2}^k,\dots,u_{1,n_2}^k,\dots,u_{n_1,n_2}^k]^{\tT},
$$
$$ f^{k}=[f_{1,1}^k,\dots,f_{n_1,1}^k,f_{1,2}^k,\dots,f_{n_1,2}^k,\dots,f_{1,n_2}^k,\dots,f_{n_1,n_2}^k]^{\tT},
$$
$$
D={\rm diag}(d_{1,1},\dots,d_{n_1,1},d_{1,2},\dots,d_{n_1,2},\dots,d_{1,n_2},\dots,d_{n_1,n_2}).
$$
Accordingly, the difference scheme \eqref{difference_scheme}--\eqref{B2} can be expressed as the following matrix vector form
\begin{equation}\label{matrix_vector}
(I-A+D)u^{k}=u^{k-1}+{\Delta t}f^{k-1},
\end{equation}
with $D$ representing the penalization matrix and
\begin{equation}\label{toeplitz_A}
A=I_{n_2}\otimes A_{x}+A_{y}\otimes I_{n_1},
\end{equation}
where $A_x=c_xG_{n_1}^{(\alpha_1)}$ and $A_y=c_yG_{n_2}^{(\alpha_2)}$, in which
\begin{align}
 G_{n}^{(\alpha)}=
 \left[
\begin{array}{cccccc}
2g_{1}^{(\alpha)} & g_{0}^{(\alpha)}+g_{2}^{(\alpha)} & g_{3}^{(\alpha)} &\ddots &g_{n-1}^{(\alpha)} &g_{n}^{(\alpha)} \\
 g_{0}^{(\alpha)}+g_{2}^{(\alpha)} &2g_{1}^{(\alpha)} &g_{0}^{(\alpha)}+g_{2}^{(\alpha)} &g_{3}^{(\alpha)} &\ddots  &g_{n-1}^{(\alpha)}  \\
 \vdots  & g_{0}^{(\alpha)}+g_{2}^{(\alpha)} &2g_{1}^{(\alpha)} & \ddots & \ddots & \vdots \\
    \vdots  & \ddots    & \ddots & \ddots & \ddots &  g_{3}^{(\alpha)}\\
 g_{n-1}^{(\alpha)}  & \ddots & \ddots &\ddots & 2g_{1}^{(\alpha)} & g_{0}^{(\alpha)}+g_{2}^{(\alpha)} \\
g_{n}^{(\alpha)} & g_{n-1}^{(\alpha)}  & \cdots & \cdots & g_{0}^{(\alpha)}+g_{2}^{(\alpha)} &2g_{1}^{(\alpha)}\\
    \end{array}
  \right]. \label{G}
\end{align}
It is obvious that $G_n^{(\alpha)}$ is a symmetric Toeplitz matrix.
In addition, it has been shown that the entries $g_{l}^{(\alpha)}$ for $l\ge 0$ defined in (\ref{g_form}) satisfy the following properties.

\begin{lemma}\label{g-property}
{\rm(See \cite{MT-JCAM-2004})} For ${\alpha}\in(1,2)$, the coefficients $g_{l}^{(\alpha)}$, $l=0,1,\dots$, satisfy
\begin{displaymath}
\left\{
\begin{aligned}
&g_{0}^{(\alpha)}=1, \ g_{1}^{(\alpha)}=-{\alpha}<0, \ g_{2}^{(\alpha)}>g_{3}^{(\alpha)}>\cdots>0,\\
&\sum\limits_{l=0}^{\infty}g_{l}^{(\alpha)}=0,\ \sum\limits_{l=0}^{n}g_{l}^{(\alpha)}<0,\  for \ n\geq 1.
\end{aligned}
\right.
\end{displaymath}
\end{lemma}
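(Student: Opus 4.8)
The plan is to recognize that the recursion in \eqref{g_form} makes $g_l^{(\alpha)}$ the Taylor coefficients of $(1-z)^{\alpha}$, namely $g_l^{(\alpha)}=(-1)^l\binom{\alpha}{l}$, since the ratio $g_l^{(\alpha)}/g_{l-1}^{(\alpha)}=1-\frac{\alpha+1}{l}$ coincides with $-(\alpha-l+1)/l$, the ratio of successive binomial terms. With this identification the entire lemma reduces to elementary sign-tracking of the multiplicative factor $w_l:=1-\frac{\alpha+1}{l}$, supplemented by a single decay estimate.

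First I would settle the first line by inducting on the recursion. Directly $g_1^{(\alpha)}=(1-(\alpha+1))g_0^{(\alpha)}=-\alpha<0$. For $l=2$ the factor $w_2=\frac{1-\alpha}{2}$ is negative because $\alpha>1$, so $g_2^{(\alpha)}=w_2\,g_1^{(\alpha)}>0$. For every $l\ge3$ one has $2<\alpha+1<3\le l$, hence $w_l\in(0,1)$; an induction then yields $g_l^{(\alpha)}>0$ together with $g_l^{(\alpha)}=w_l\,g_{l-1}^{(\alpha)}<g_{l-1}^{(\alpha)}$, which is exactly the chain $g_2^{(\alpha)}>g_3^{(\alpha)}>\cdots>0$.

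For the two summation statements I would pass to the partial sums $S_n:=\sum_{l=0}^{n}g_l^{(\alpha)}$ and establish the telescoping identity $S_n=(-1)^n\binom{\alpha-1}{n}=g_n^{(\alpha-1)}$. This is checked at $n=0$ and propagated by the generalized Pascal rule $\binom{\alpha}{l}=\binom{\alpha-1}{l}+\binom{\alpha-1}{l-1}$, which makes the increments of both sides agree. Writing $\beta:=\alpha-1\in(0,1)$, the same sign analysis as above—now applied to $g_n^{(\beta)}$—gives $g_1^{(\beta)}=-\beta<0$ and a factor $1-\frac{\beta+1}{n}\in(0,1)$ for $n\ge2$, so $g_n^{(\beta)}<0$ for all $n\ge1$; equivalently $S_n<0$ for $n\ge1$, which is the second displayed inequality. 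Finally, since $\{g_n^{(\beta)}\}_{n\ge1}$ is negative and strictly increasing toward a limit $L\le0$, and $|g_n^{(\beta)}|=(\alpha-1)\prod_{l=2}^{n}\bigl(1-\tfrac{\alpha}{l}\bigr)$ has logarithm bounded above by $\log(\alpha-1)-\alpha\sum_{l=2}^{n}\frac1l\to-\infty$, we conclude $g_n^{(\beta)}\to0$, whence $\sum_{l=0}^{\infty}g_l^{(\alpha)}=\lim_{n\to\infty}S_n=0$.

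The main obstacle is precisely the evaluation of the infinite sum, which amounts to summing the power series of $(1-z)^{\alpha}$ at $z=1$, the boundary of its disk of convergence. The telescoping rewriting $S_n=g_n^{(\alpha-1)}$ lets me avoid invoking Abel's theorem and instead pin the value down through the elementary decay estimate $g_n^{(\alpha-1)}\to0$; this estimate is the only genuinely analytic ingredient, while every other assertion is bookkeeping on the recursion and its factor $w_l$.
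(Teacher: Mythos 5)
The paper does not actually prove this lemma: it is quoted verbatim from the reference \cite{MT-JCAM-2004} and used as a black box, so there is no in-paper argument to compare against. Your proof is correct and self-contained, and it is worth recording why. The identification $g_l^{(\alpha)}=(-1)^l\binom{\alpha}{l}$ is exactly right (the ratio $1-\frac{\alpha+1}{l}$ equals $-\frac{\alpha-l+1}{l}$), and the sign bookkeeping on $w_l=1-\frac{\alpha+1}{l}$ correctly yields $g_1^{(\alpha)}=-\alpha<0$, $g_2^{(\alpha)}=\frac{\alpha(\alpha-1)}{2}>0$, and $w_l\in(0,1)$ for $l\ge 3$, hence the strictly decreasing positive tail. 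The telescoping identity $S_n=(-1)^n\binom{\alpha-1}{n}$ via the generalized Pascal rule is the clean way to get both $S_n<0$ (by rerunning the sign analysis with $\beta=\alpha-1\in(0,1)$, where now $g_1^{(\beta)}=-\beta<0$ and all subsequent factors lie in $(0,1)$, so every $g_n^{(\beta)}$ stays negative) and the value of the infinite sum, since the divergence of $\sum_l \alpha/l$ together with $\log(1-x)\le -x$ forces $\prod_{l=2}^{n}(1-\alpha/l)\to 0$ and hence $S_n\to 0$. This neatly sidesteps the boundary-of-convergence issue (Abel summation of $(1-z)^{\alpha}$ at $z=1$) that a generating-function argument would otherwise have to confront. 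The only cosmetic remark is that your display $|g_n^{(\beta)}|=(\alpha-1)\prod_{l=2}^{n}(1-\alpha/l)$ silently uses $\beta+1=\alpha$; it is correct, but stating that substitution explicitly would make the step easier to follow.
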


Combining the results shown in \cite{HNS} with Lemma \ref{g-property}, we conclude that $G_n^{(\alpha)}$ is negative definite for $\alpha\in(1,2)$. Moreover, we deduce that the matrix $A$ is with the following property.
\begin{corollary}\label{corollary}
The matrix $A$ defined in \eqref{toeplitz_A} is symmetric negative definite.
\end{corollary}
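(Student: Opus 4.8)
The plan is to reduce the statement entirely to the negative definiteness of the building blocks $G_{n_1}^{(\alpha_1)}$ and $G_{n_2}^{(\alpha_2)}$, which has already been established in the sentence preceding the corollary by combining Lemma~\ref{g-property} with the results of \cite{HNS}. Everything else is then a matter of propagating that property through the scalar factors and the Kronecker structure in \eqref{toeplitz_A}.

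First I would dispose of symmetry. Since $G_n^{(\alpha)}$ is a symmetric Toeplitz matrix and $c_x,c_y>0$ are real scalars, both $A_x=c_xG_{n_1}^{(\alpha_1)}$ and $A_y=c_yG_{n_2}^{(\alpha_2)}$ are symmetric. Using $(P\otimes Q)^{\tT}=P^{\tT}\otimes Q^{\tT}$ together with $I_{n_2}^{\tT}=I_{n_2}$ and $I_{n_1}^{\tT}=I_{n_1}$, each of the two terms $I_{n_2}\otimes A_x$ and $A_y\otimes I_{n_1}$ is symmetric, hence so is their sum $A$.

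For the definiteness I would show that each summand in \eqref{toeplitz_A} is itself negative definite. Because $c_x>0$ and $G_{n_1}^{(\alpha_1)}$ is negative definite, $A_x$ is negative definite; likewise $A_y$. The eigenvalues of $I_{n_2}\otimes A_x$ are exactly those of $A_x$, each repeated $n_2$ times, and thus all strictly negative, so $I_{n_2}\otimes A_x$ is negative definite; applying the same reasoning to $A_y\otimes I_{n_1}$ shows it is negative definite as well. Consequently, for any nonzero $v\in\IR^{N}$ one has $v^{\tT}Av=v^{\tT}(I_{n_2}\otimes A_x)v+v^{\tT}(A_y\otimes I_{n_1})v<0$, since both terms are strictly negative. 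Equivalently, I could invoke the Kronecker-sum spectral identity: the eigenvalues of $A=I_{n_2}\otimes A_x+A_y\otimes I_{n_1}$ are the pairwise sums $\lambda_i(A_x)+\mu_j(A_y)$, each of which is a sum of two negative numbers and therefore negative. Either route yields that $A$ is symmetric negative definite.

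I do not expect a genuine obstacle here: the entire substance is carried by the prior fact that $G_n^{(\alpha)}$ is negative definite, and once that is granted the corollary follows from the elementary observations that a positive multiple of a negative definite matrix stays negative definite, that a Kronecker product with an identity preserves the spectrum up to multiplicity, and that a sum of negative definite matrices is negative definite. The only point warranting a little care is bookkeeping the transpose and spectral identities for Kronecker products correctly given the possibly unequal sizes $n_1\neq n_2$.
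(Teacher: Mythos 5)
Your proposal is correct and follows essentially the same route the paper takes: the paper simply deduces the corollary from the negative definiteness of $G_n^{(\alpha)}$ (obtained by combining Lemma~\ref{g-property} with \cite{HNS}), and your argument fills in the routine Kronecker-structure details (symmetry, spectrum of $I\otimes A_x$ and $A_y\otimes I$, sum of negative definite matrices) that the paper leaves implicit.
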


For convenience in our later study, we simplify the linear system (\ref{matrix_vector}) as
\begin{equation}\label{linearsystem}
Mu^{k}=b^{k-1},
\end{equation}
where $M=I-A+D$ and $b^{k-1}=u^{k-1}+\Delta tf^{k-1}$.
Based on the results shown in Corollary \ref{corollary}, the following conclusion can be drawn reasonably.
\begin{lemma}\label{M_PD}
The coefficient matrix $M=I-A+D$ is symmetric positive definite and follows
$$
\sum_{j=1,j\ne i}^{n_1n_2}|[M]_{ij}|\le [M]_{ii}-1, \  for \ i=1,\dots,n_1n_2.
$$
\end{lemma}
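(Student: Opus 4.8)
The plan is to treat the two assertions separately, deriving both from structural facts already in hand: Corollary~\ref{corollary} (that $A$ is symmetric negative definite) together with the sign and partial-sum properties of the Gr\"unwald weights in Lemma~\ref{g-property}. For the symmetric positive definiteness I would first observe that $M=I-A+D$ is symmetric, since $A$ is symmetric and $I,D$ are diagonal. By Corollary~\ref{corollary} the matrix $-A$ is symmetric positive definite, hence $I-A$ is symmetric positive definite as well; since $D=\mathrm{diag}(d_{i,j})$ has nonnegative entries it is symmetric positive semidefinite, and adding it preserves positive definiteness. Thus $M$ is SPD.

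The substantive part is the weak diagonal-dominance estimate. Because $I$ and $D$ are diagonal, the off-diagonal entries of $M$ coincide with those of $-A$, so $\sum_{j\ne i}|[M]_{ij}|=\sum_{j\ne i}|[A]_{ij}|$, while $[M]_{ii}-1=-[A]_{ii}+d_{i,i}\ge -[A]_{ii}$ because $d_{i,i}\ge 0$. Hence it suffices to show that $-A$ is weakly diagonally dominant, i.e.\ $\sum_{j\ne i}|[A]_{ij}|\le -[A]_{ii}$ for every $i$ (recall $[A]_{ii}<0$). I would reduce this to the one-dimensional factor $G_n^{(\alpha)}$. By Lemma~\ref{g-property} every off-diagonal entry of $G_n^{(\alpha)}$ is positive, since $g_0^{(\alpha)}+g_2^{(\alpha)}>0$ and $g_l^{(\alpha)}>0$ for $l\ge 2$, while the diagonal is $2g_1^{(\alpha)}=-2\alpha<0$. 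Reading off the Toeplitz band pattern, the band at distance $d\ge 2$ carries the weight $g_{d+1}^{(\alpha)}$, so the off-diagonal sum of a row splits into a left and a right contribution, each telescoping into a partial sum of the form $\sum_{l=0}^{m}g_l^{(\alpha)}-g_1^{(\alpha)}$ for a suitable $m\ge 1$. The partial-sum inequality $\sum_{l=0}^{m}g_l^{(\alpha)}<0$ of Lemma~\ref{g-property} then bounds each contribution by $-g_1^{(\alpha)}$, so the full off-diagonal row sum is at most $-2g_1^{(\alpha)}=-[G_n^{(\alpha)}]_{ii}$ (boundary rows, having only one contribution, satisfy the bound a fortiori). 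Multiplying by the positive constants $c_x$ and $c_y$ transfers this property verbatim to $A_x=c_xG_{n_1}^{(\alpha_1)}$ and $A_y=c_yG_{n_2}^{(\alpha_2)}$.

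Finally I would propagate the estimate through the Kronecker sum $A=I_{n_2}\otimes A_x+A_y\otimes I_{n_1}$. Indexing the rows of $A$ by pairs $(i,j)$, the diagonal entry is $[A_x]_{ii}+[A_y]_{jj}$, and the off-diagonal entries of that row are precisely the off-diagonal entries of $A_x$ (at fixed $y$-index $j$) together with those of $A_y$ (at fixed $x$-index $i$), with no cross terms present. Consequently the off-diagonal row sum of $A$ equals the sum of the corresponding row sums of $A_x$ and $A_y$, and adding the two one-dimensional dominance inequalities yields $\sum_{j\ne i}|[A]_{ij}|\le -[A]_{ii}$. Combining this with the reductions of the previous paragraph gives $\sum_{j\ne i}|[M]_{ij}|\le[M]_{ii}-1$, which is the claim.

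I expect the only delicate step to be the row-sum bookkeeping for $G_n^{(\alpha)}$: correctly matching each off-diagonal band at distance $d$ to the weight $g_{d+1}^{(\alpha)}$, separating out the single entry $g_0^{(\alpha)}+g_2^{(\alpha)}$ on the first super/sub-diagonal, and assembling the telescoped partial sums so that the key inequality $\sum_{l=0}^{m}g_l^{(\alpha)}<0$ can be applied. The Kronecker-sum propagation and the SPD claim are then routine.
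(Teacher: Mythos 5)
Your proof is correct and follows essentially the same route as the paper: positive definiteness from Corollary~\ref{corollary}, and the row-sum inequality from the strict diagonal dominance of $A$ with negative diagonal, using $d_{i,i}\ge 0$ to absorb the penalty term. The only difference is that you explicitly verify the diagonal dominance of $G_n^{(\alpha)}$ (and its propagation through the Kronecker sum) from Lemma~\ref{g-property}, whereas the paper simply cites this as a known fact; your bookkeeping of the bands and partial sums is accurate.
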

\begin{proof}
From Corollary \ref{corollary}, we derive that the coefficient matrix $M$ is positive definite.
It is easy to check that $[M]_{ii}=1-[A]_{ii}+\frac{\eta}{\Delta t}$ if $(x_i,y_i)\in \bar\Omega\setminus\Omega$, or $[M]_{ii}=1-[A]_{ii}$. By following the fact that $A$ is strictly diagonal dominant with negative diagonal elements, we obtain
$$
\sum_{j=1,j\ne i}^{n_1n_2}|[M]_{ij}|=\sum_{j=1,j\ne i}^{n_1n_2}|[A]_{ij}|\le |[A]_{ii}|\le [M]_{ii}-1.
$$
\end{proof}

\section{Stability and convergence analysis}\label{S_AND_C}
In this section, the stability and convergence of the difference scheme (\ref{difference_scheme})-\eqref{B2} are discussed. Firstly, we introduce an auxiliary lemma, which plays a critical role in later investigation.
\begin{lemma}\label{infty_norm}
{\rm (See \cite{CLTA-AMC-2013})} Let $v=[v_1,v_2,\dots,v_n]^{\tT}\in \IR^{n}$ be an arbitrary vector. If the matrix $B=[b_{i,j}]_{n\times n}$ satisfies the following condition
$$
\sum_{l=1,l\neq i}^{n}|b_{i,l}|\leq b_{i,i}-1,
$$
then we have $\|v\|_{\infty}\leq\|Bv\|_{\infty}$.
\end{lemma}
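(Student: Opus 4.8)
The plan is to exploit the $\ell^\infty$ structure by localizing at the coordinate where $v$ attains its maximum modulus. First I would dispose of the trivial case $v=0$ (where the claim is immediate), and otherwise choose an index $i$ with $|v_i|=\|v\|_\infty$. The entire argument will then hinge on comparing the $i$-th entry of $Bv$ against $\|v\|_\infty$, using the $i$-th row of the diagonal-dominance-type hypothesis.

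Before estimating, I would record a small but essential consequence of the hypothesis: it forces $b_{i,i}\ge 1$. Indeed, since $\sum_{l\ne i}|b_{i,l}|\ge 0$, the inequality $\sum_{l\ne i}|b_{i,l}|\le b_{i,i}-1$ gives $b_{i,i}\ge 1>0$. Thus the diagonal entry is strictly positive and may be extracted from the sum without an absolute value, which is exactly what makes the reverse triangle inequality point in the useful direction.

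The core step is to apply the reverse triangle inequality to
$$
(Bv)_i=b_{i,i}v_i+\sum_{l\ne i}b_{i,l}v_l,
$$
which yields $|(Bv)_i|\ge b_{i,i}|v_i|-\sum_{l\ne i}|b_{i,l}|\,|v_l|$. Next I would invoke the maximality of the chosen coordinate, namely $|v_l|\le|v_i|$ for every $l$, to bound the off-diagonal contribution by $|v_i|\sum_{l\ne i}|b_{i,l}|$, and then use the hypothesis $\sum_{l\ne i}|b_{i,l}|\le b_{i,i}-1$ to get $\le|v_i|(b_{i,i}-1)$. Substituting this estimate gives $|(Bv)_i|\ge b_{i,i}|v_i|-(b_{i,i}-1)|v_i|=|v_i|=\|v\|_\infty$.

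Finally, since $\|Bv\|_\infty\ge|(Bv)_i|$ by definition of the maximum norm, I would conclude $\|Bv\|_\infty\ge\|v\|_\infty$, which is the desired estimate. The argument is entirely elementary and I do not anticipate any genuine obstacle; the only points that require care are guaranteeing the positivity of $b_{i,i}$ so that the reverse triangle inequality is deployed correctly, and ensuring that the off-diagonal terms are bounded using the \emph{maximal} coordinate $v_i$ rather than an arbitrary component.
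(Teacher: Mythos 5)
Your proof is correct and complete: the positivity of $b_{i,i}$, the reverse triangle inequality at the maximizing index, and the bound $|v_l|\le|v_i|$ are exactly the ingredients needed, and they are assembled in the right order. The paper itself states this lemma without proof, citing \cite{CLTA-AMC-2013}; your argument is precisely the standard one given there, so there is nothing to add.
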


In order to investigate the stability of the difference scheme, we begin with some notations. Suppose $\tilde{u}_{i,j}^{k}(1\leq i\leq n_1,\ 1\leq j\leq n_2)$ be the approximation solution of the difference scheme \eqref{difference_scheme}. Denote $\epsilon_{i,j}^{k}=\tilde{u}_{i,j}^{k}-u_{i,j}^{k}(1\leq i\leq n_1,\ 1\leq j\leq n_2)$ be the corresponding error. Then, the error vector can be depicted as
$$
\epsilon^{k}=[\epsilon_{1,1}^{k},\dots,\epsilon_{n_1,1}^{k},\epsilon_{1,2}^{k},\dots,\epsilon_{n_1,2}^{k},\dots,
\epsilon_{1,n_2}^{k},\dots,\epsilon_{n_1,n_2}^{k}]^{\tT}.
$$
Let $\gamma_{i,j}^{k}=\hat f(\tilde{u}_{i,j}^{k},x_i,y_j,t_k)-\hat f(u_{i,j}^{k},x_i,y_j,t_k)$. Denote
$$
\gamma^{k}=[\gamma_{1,1}^{k},\dots,\gamma_{n_1,1}^{k},\gamma_{1,2}^{k},\dots,\gamma_{n_1,2}^{k},\dots,
\gamma_{1,n_2}^{k},\dots,\gamma_{n_1,n_2}^{k}]^{\tT}.
$$

\begin{lemma}
The difference scheme \eqref{difference_scheme}-\eqref{B2} is unconditionally stable.
\end{lemma}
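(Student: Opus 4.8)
The plan is to track how a perturbation of the numerical solution propagates through the time-stepping and to bound its growth by a factor that depends only on the final time $T$ and the Lipschitz constant $c_2$, never on the mesh parameters. First I would write the matrix form \eqref{matrix_vector} for both the numerical solution $u^k$ and the perturbed solution $\tilde u^k=[\tilde u_{i,j}^k]$ and subtract the two equations. Since the matrix $M=I-A+D$ is identical in both, the error vector $\epsilon^k$ satisfies
\begin{equation*}
M\epsilon^{k}=\epsilon^{k-1}+\Delta t\,\gamma^{k-1},
\end{equation*}
where $\gamma^{k-1}$ gathers the differences of the source term at level $k-1$. The Lipschitz condition in $u$ (with constant $c_2$) gives at once the componentwise estimate $\|\gamma^{k-1}\|_{\infty}\le c_2\|\epsilon^{k-1}\|_{\infty}$.

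The crucial step is to turn this matrix identity into an $\infty$-norm bound on $\epsilon^k$ itself. By Lemma \ref{M_PD} the matrix $M$ satisfies exactly the row condition $\sum_{j\neq i}|[M]_{ij}|\le[M]_{ii}-1$ required in Lemma \ref{infty_norm}; applying that lemma with $B=M$ and $v=\epsilon^k$ yields $\|\epsilon^k\|_{\infty}\le\|M\epsilon^k\|_{\infty}$. Combining this with the error equation and the triangle inequality, I obtain
\begin{equation*}
\|\epsilon^k\|_{\infty}\le\|\epsilon^{k-1}\|_{\infty}+\Delta t\,\|\gamma^{k-1}\|_{\infty}\le(1+c_2\Delta t)\,\|\epsilon^{k-1}\|_{\infty}.
\end{equation*}

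Iterating this recursion from level $k$ down to the initial level gives $\|\epsilon^k\|_{\infty}\le(1+c_2\Delta t)^k\|\epsilon^0\|_{\infty}$, and since $k\Delta t\le T$ we have $(1+c_2\Delta t)^k\le e^{c_2k\Delta t}\le e^{c_2T}$. Hence $\|\epsilon^k\|_{\infty}\le e^{c_2T}\|\epsilon^0\|_{\infty}$ for every $k$, a bound uniform in $m$, $n_1$ and $n_2$, which is precisely the assertion of unconditional stability.

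I do not expect a genuine obstacle in this argument: everything hinges on recognising that the diagonal-dominance estimate established in Lemma \ref{M_PD} is exactly the hypothesis of Lemma \ref{infty_norm}, so that the implicit solve can never amplify the error. The only points requiring a little care are keeping the time indices on $\gamma$ consistent (the source is evaluated at level $k-1$ in the scheme) and observing that the amplification factor $(1+c_2\Delta t)^k$ stays bounded by $e^{c_2T}$ across the whole time horizon, which is what removes any restriction relating $\Delta t$ to $h_x$ and $h_y$.
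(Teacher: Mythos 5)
Your argument is correct and follows essentially the same route as the paper's own proof: the error equation $M\epsilon^{k}=\epsilon^{k-1}+\Delta t\,\gamma^{k-1}$, the Lipschitz bound on $\gamma^{k-1}$, the combination of Lemma \ref{M_PD} with Lemma \ref{infty_norm} to obtain $\|\epsilon^k\|_{\infty}\le(1+c_2\Delta t)\|\epsilon^{k-1}\|_{\infty}$, and iteration to the bound $e^{c_2T}\|\epsilon^0\|_{\infty}$. The only cosmetic difference is that you bound $(1+c_2\Delta t)^k$ directly by $e^{c_2T}$ where the paper invokes the discrete Gronwall inequality, which amounts to the same estimate.
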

\begin{proof}
According to \eqref{difference_scheme} and \eqref{matrix_vector}, we obtain the error equation as
$$
M\epsilon^{k}=\epsilon^{k-1}+\Delta t\gamma^{k-1}.
$$
Since $\hat f(u_\eta,x,y,t)$ satisfies the Lipschitz condition, it yields
$$
|\hat f(\tilde{u}_{ij}^{k-1},x_i,y_j,t_{k-1})-\hat f(u_{ij}^{k-1},x_i,y_j,t_{k-1})|\leq c_2|\tilde{u}_{ij}^{k-1}-u_{ij}^{k-1}|=c_2|\epsilon_{ij}^{k-1}|,
$$
i.e.,
$$
\|\gamma^{k-1}\|_{\infty}\leq c_2\|\epsilon^{k-1}\|_{\infty}.
$$
Combining Lemma \ref{M_PD} with Lemma \ref{infty_norm}, we have the following inequality
$$
\|\epsilon^{k}\|_{\infty}\leq \|M\epsilon^{k}\|_{\infty}=\|\epsilon^{k-1}+\Delta t\gamma^{k-1}\|_{\infty}\leq (1+\Delta tc_2)\|\epsilon^{k-1}\|_{\infty}.
$$
By repeating the above inequality $k$ times and making use of the Gronwall inequality \cite{Gronwall-2009}, we derive
$$
\|\epsilon^{k}\|_{\infty}\leq (1+\Delta tc_2)^{k}\|\epsilon^0\|_{\infty}\leq e^{c_2T}\|\epsilon^{0}\|_{\infty},
$$
which indicates that the difference scheme is unconditionally stable.
\end{proof}


Now, we pay attention to the convergence of the difference scheme. Here some notations are presented. Let $r_{i,j}^{k}$ be the truncated error between difference scheme \eqref{difference_scheme} and equation \eqref{M1} shown in \eqref{residual}. Denote
$$
r^k=[r_{1,1}^k,\dots, r_{n_1,1}^{k},r_{1,2}^{k},\dots,r_{n_1,2}^{k},\dots,r_{1,n_2}^{k},\dots,r_{n_1,n_2}^{k}]^{\tT}.
$$
Let $\delta_{i,j}^{k}=u_\eta(x_i,y_j,t_k)-u_{i,j}^{k}$, $1\leq i\leq n_1, 1\leq j\leq n_2$, be the error between the exact solution of the problem \eqref{M1}--\eqref{B1} and the numerical solution of the difference scheme \eqref{difference_scheme}--\eqref{B2}. The error vector can be written as
$$
\delta^k=[\delta_{1,1}^k,\dots, \delta_{n_1,1}^{k},\delta_{1,2}^{k},\dots,\delta_{n_1,2}^{k},\dots,\delta_{1,n_2}^{k},\dots,\delta_{n_1,n_2}^{k}]^{\tT}.
$$
Set $\xi_{i,j}^{k}$($1\leq i\leq n_1, 1\leq j\leq n_2$) be the error between $\hat f(u_\eta(x_i,y_j,t_k),x_i,y_j,t_k)$ and $\hat f(u_{i,j}^{k},x_i,y_j,t_k)$.
Denote
$$
{\xi}^k=[{\xi}_{1,1}^k,\dots,{\xi}_{n_1,1}^{k},
\xi_{1,2}^{k},\dots,\xi_{n_1,2}^{k},\dots,\xi_{1,n_2}^{k},\dots,\xi_{n_1,n_2}^{k}]^{\tT}.
$$
Then, we have the following convergent theorem.

\begin{lemma}
The difference scheme defined in \eqref{difference_scheme}--\eqref{B2} is convergent and satisfies
$$
\|\delta^k\|_{\infty}\leq c(\Delta t+h_x+h_y),
$$
where $c$ is a positive constant independent of temporal step ${\Delta}t$ and spatial step $h_x$ and $h_y$.
\end{lemma}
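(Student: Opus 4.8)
The plan is to mirror the structure of the stability proof, replacing the perturbation error by the discretization error $\delta^k$ and treating the truncation residual $r^k$ as an extra forcing term. First I would record that, by construction, the exact solution satisfies the scheme up to its residual: substituting $u_\eta(x_i,y_j,t_k)$ into \eqref{difference_scheme} and retaining the term $\Delta t\,r_{ij}^k$ from \eqref{approximation} gives, in the matrix-vector notation of \eqref{matrix_vector},
$$
M\,U_\eta^k = U_\eta^{k-1} + \Delta t\,\hat F(U_\eta^{k-1}) + \Delta t\, r^k,
$$
where $U_\eta^k$ is the vector of exact nodal values $u_\eta(x_i,y_j,t_k)$ ordered as in $u^k$, and $\hat F(U_\eta^{k-1})$ collects $\hat f(u_\eta(x_i,y_j,t_{k-1}),x_i,y_j,t_{k-1})$. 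Subtracting the numerical scheme $M u^k = u^{k-1} + \Delta t\,\hat F(u^{k-1})$ from this identity produces the error equation
$$
M\delta^k = \delta^{k-1} + \Delta t\,\xi^{k-1} + \Delta t\, r^k,
$$
in which $\xi^{k-1}$ is precisely the vector, introduced before the statement, of differences of the source term evaluated at the exact and the numerical solutions at time level $k-1$.

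Next I would bound the two forcing terms. The Lipschitz continuity of $\hat f$ in its first argument gives $|\xi_{ij}^{k-1}|\le c_2|\delta_{ij}^{k-1}|$, hence $\|\xi^{k-1}\|_\infty \le c_2\|\delta^{k-1}\|_\infty$, exactly as in the stability argument, while \eqref{residual} gives $\|r^k\|_\infty \le c_0(h_x+h_y+\Delta t)$. The decisive step is then to invoke Lemma \ref{infty_norm}: since $M$ satisfies the row condition established in Lemma \ref{M_PD}, we have $\|\delta^k\|_\infty \le \|M\delta^k\|_\infty$, which turns the error equation into the recursive estimate
$$
\|\delta^k\|_\infty \le \|\delta^{k-1}+\Delta t\,\xi^{k-1}+\Delta t\, r^k\|_\infty \le (1+\Delta t\,c_2)\|\delta^{k-1}\|_\infty + \Delta t\,c_0(h_x+h_y+\Delta t).
$$

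Finally I would close the recursion. Because the initial data are taken exactly through \eqref{I2}, we have $\delta^0=0$; iterating the inequality $k$ times and summing the resulting geometric series yields
$$
\|\delta^k\|_\infty \le \Delta t\,c_0(h_x+h_y+\Delta t)\sum_{j=0}^{k-1}(1+\Delta t\,c_2)^j = \frac{(1+\Delta t\,c_2)^k-1}{c_2}\,c_0(h_x+h_y+\Delta t).
$$
Bounding $(1+\Delta t\,c_2)^k \le e^{c_2 k\Delta t} \le e^{c_2 T}$, equivalently applying the discrete Gronwall inequality as in the stability proof, gives $\|\delta^k\|_\infty \le c(\Delta t+h_x+h_y)$ with $c = c_0(e^{c_2 T}-1)/c_2$ independent of $\Delta t$, $h_x$, and $h_y$.

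The argument is largely routine once the error equation is assembled; the one step carrying real weight is the application of Lemma \ref{infty_norm}, which is what lets the error be controlled \emph{by} the residual rather than the reverse. The only points needing care are the consistent pairing of the nonlinear source at time level $k-1$ for both solutions, so that Lipschitz continuity applies cleanly to bound $\xi^{k-1}$, and confirming $\delta^0=0$ from the exact initialization.
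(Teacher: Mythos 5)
Your proposal is correct and follows essentially the same route as the paper: assemble the error equation $M\delta^k=\delta^{k-1}+\Delta t\,\xi^{k-1}+\Delta t\,r^k$, bound $\xi^{k-1}$ by the Lipschitz condition, invoke Lemma \ref{M_PD} together with Lemma \ref{infty_norm} to get $\|\delta^k\|_\infty\le\|M\delta^k\|_\infty$, and iterate with $\delta^0=0$ and a Gronwall-type bound. Your explicit geometric-series summation even yields the marginally sharper constant $c_0(e^{c_2T}-1)/c_2$ in place of the paper's $c_0 e^{c_2T}/c_2$.
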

\begin{proof}
Since $\hat f(u_\eta,x,y,t)$ satisfies the Lipschitz condition concerning $u_{\eta}$, we have
$$
|\xi_{i,j}^{k}|\leq c_2|\delta_{i,j}^{k}|, \  for \ all \ i,j.
$$
Then, it follows that
$$
\|\xi^k\|_{\infty}\leq c_2\|\delta^{k}\|_{\infty}.
$$
Due to the difference scheme \eqref{difference_scheme} is consistent, according to \eqref{approximation} and \eqref{matrix_vector}, we obtain the following error equation
$$
M\delta^k=\delta^{k-1}+\Delta t\xi^{k-1}+\Delta tr^k,
$$
where $\delta^0=0$ and $\|r^k\|_{\infty}\leq c_0(\Delta t+h_x+h_y)$.
By virtue of Lemma \ref{M_PD} and Lemma \ref{infty_norm} again, it leads to
\begin{align*}
\|\delta^k\|_{\infty}&\leq \|M\delta^k\|_{\infty}\\
&\leq\|\delta^{k-1}\|_{\infty}+\Delta t \|\xi^{k-1}\|_{\infty}+\Delta t\|r^k\|_{\infty}\\
&\leq(1+\Delta tc_2)\|\delta^{k-1}\|_{\infty}+\Delta t c_0(\Delta t+h_x+h_y).
\end{align*}
By repeating the above processes $k$ times, it follows that
$$
\|\delta^k\|_{\infty}\leq \frac{c_0}{c_2}(1+\Delta tc_2)^k(\Delta t+h_x+h_y)\leq \frac{c_0}{c_2}e^{c_2T}(\Delta t+h_x+h_y)=c(\Delta t+h_x+h_y),
$$
where $c=\frac{c_0}{c_2}e^{c_2T}$.
Therefore, we confirm that the difference method is convergent.
\end{proof}

\section{Implementation}\label{implement}

In this section, we expect to numerically solve the linear system (\ref{linearsystem}). As the coefficient matrix of the system is with the Toeplitz-like structure, the Krylov subspace method is employed to solve the linear system. In order to speed up the convergence rate of the iterative method, an efficient preconditioner is indispensable. In the following, we concentrate on constructing a preconditioner and discussing the spectrum of the preconditioned matrix.


\subsection{$\tau$ preconditioner}

Firstly, we recall the sine transform based preconditioner, which is also called the ${\tau}$ preconditioner. Denote $T_n=[t_{|i-j|}]_{n\times n}$ be an $n\times n$ symmetric Toeplitz matrix. Then, the corresponding $\tau$ preconditioner of $T_n$ can be determined by the Hankel correction \cite{BB-Conference-1990}. More precisely, the $\tau$ matrix can be described as
\begin{equation}
{\tau}(T_n)=T_n-H_n,
\end{equation}
where $H_n$ is a Hankel matrix whose entries are constant along the antidiagonals, in which the antidiagonals are depicted as
$$
[t_2,t_3,\dots,t_{n-1},0,0,0,t_{n-1},\dots,t_3,t_2]^{\tT}.
$$

Remark that the ${\tau}$ matrix can be diagonalized by the sine transform matrix, which is written as
$$
{\tau}(T_n)=S_n\Lambda_nS_n,
$$
where the diagonal matrix $\Lambda_n$ is consist of all the eigenvalues of the matrix ${\tau}(T_n)$, and $S_n$ is a symmetric orthogonal matrix whose entries are given by
\begin{equation*}
[S_n]_{i,j}=\sqrt{\frac{2}{n+1}}\sin{(\frac{\pi ij}{n+1})}, \quad 1\leq i,j \leq n.
\end{equation*}
Then, the matrix vector multiplication $S_nv$ for any vector $v$ can be done by the discrete sine transform and only $\mO(n\log n)$ operations are required. The eigenvalues of the $\tau$ matrix are determined by its first column with $\mO(n)$ storage being needed. In the following, we construct the preconditioner for system \eqref{linearsystem}.

Recall the coefficient matrix, the corresponding preconditioner based on the sine transform are described as
\begin{equation}\label{p1}
P=I-\tau_1(A)+D,
\end{equation}
where
\begin{equation}\label{tau_A}
\tau_1(A)=I_{n_2}\otimes \tau(A_x)+\tau(A_y)\otimes I_{n_1}.
\end{equation}
However, the diagonal matrix $D$ cannot be diagonalized by the sine transform matrix, which leads to that $P$ is hard to be inverted in $\mO(N\log N)$ operations. We then follow the idea in \cite{DSW-CAM-2019} to construct a workable and efficient preconditioner.

First of all, rewrite the coefficient matrix $M$ as the following splitting form
\begin{equation}
M=(I-\Phi_{d})(I-A)+\Phi_{d}\left((1+\frac{\Delta t}{\eta})I-A\right),
\end{equation}
where $\Phi_d={\rm diag}(\phi_{11},\dots,\phi_{n_1,1},\phi_{12},\dots,\phi_{n_1,2},\dots,\phi_{1,n_2},\dots,\phi_{n_1,n_2})$
is a diagonal matrix with entries
\begin{align*}
\phi_{ij}=\left\{
\begin{array}{cc}
0, &   (x_i,y_j)\in\Omega, \\
1, & \quad \ (x_i,y_j)\in\bar{\Omega}\setminus\Omega.
\end{array}
\right.
\end{align*}
Accordingly, the preconditioner can be expressed as
\begin{equation}
P=(I-\Phi_{d})(I-\tau_1(A))+\Phi_{d}\left((1+\frac{\Delta t}{\eta})I-\tau_1(A)\right).
\end{equation}
Likewise, it is too expensive to compute $P^{-1}v$  for an arbitrary vector $v$.
Therefore, we   construct a preconditioner $\hat{P}$ to approximate $P$ such that
\begin{equation}\label{preconditioner}
\hat{P}^{-1}=(I-\Phi_{d})(I-\tau_1(A))^{-1}+\Phi_{d}\left((1+\frac{\Delta t}{\eta})I-\tau_1(A)\right)^{-1}.
\end{equation}
In this circumstance, the product of the matrix $\hat P^{-1}$ and a vector can be done in $\mO(N\log N)$ operations by the discrete sine transform, and hence the computational cost on each time iteration keeps $\mO(N\log N)$ by the preconditioned Krylov subspace method. Due to the diagonal matrices destroy the symmetric structure of the coefficient matrix, the generalized minimum residual (GMRES) method with the preconditioner $\hat P$ is exploited to solve the linear system \eqref{linearsystem}. In the following, we discuss the spectrum of the preconditioned matrix.

\subsection{Spectral analysis}
For the study of the spectra, it suffices to consider the existence of the preconditioner. A useful result emerged in \cite{HNS} will be utilized to demonstrate the invertibility of the matrices $P$ and $\hat P$.


\begin{lemma}\label{A_xND}
{\rm(See \cite{HNS})} Let $G_{n_1}^{(\alpha_1)}$ and $G_{n_2}^{(\alpha_2)}$ be Toeplitz matrices as defined in \eqref{G}. Then the matrices $\tau(A_x)=c_x\tau(G_{n_1}^{(\alpha_1)})$ and $\tau(A_y)=c_y\tau(G_{n_2}^{(\alpha_2)})$ are both negative definite.
\end{lemma}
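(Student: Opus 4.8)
The plan is to diagonalize $\tau(G_n^{(\alpha)})$ by the sine transform and to show that \emph{all} of its eigenvalues are strictly negative; since $c_x,c_y>0$, the negative definiteness of $\tau(A_x)=c_x\tau(G_{n_1}^{(\alpha_1)})$ and $\tau(A_y)=c_y\tau(G_{n_2}^{(\alpha_2)})$ then follows immediately, so it suffices to treat $\tau(G_n^{(\alpha)})$ for a generic $\alpha\in(1,2)$. Because $G_n^{(\alpha)}$ is a symmetric Toeplitz matrix, its $\tau$ approximation lies in the $\tau$ algebra and is diagonalized as $\tau(G_n^{(\alpha)})=S_n\Lambda_nS_n$, where the eigenvalues are obtained by evaluating the cosine polynomial built from the first column of $G_n^{(\alpha)}$ at the grid points $\theta_j=\frac{j\pi}{n+1}$, $j=1,\dots,n$. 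Reading off \eqref{G}, the symmetric Toeplitz entries are $t_0=2g_1^{(\alpha)}$, $t_1=g_0^{(\alpha)}+g_2^{(\alpha)}$ and $t_k=g_{k+1}^{(\alpha)}$ for $2\le k\le n-1$, so the standard eigenvalue formula for $\tau$ matrices gives
\[
\lambda_j=2g_1^{(\alpha)}+2\bigl(g_0^{(\alpha)}+g_2^{(\alpha)}\bigr)\cos\theta_j+2\sum_{k=2}^{n-1}g_{k+1}^{(\alpha)}\cos(k\theta_j).
\]

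The next step is a reindexing that puts the eigenvalue into a single compact sum. Substituting $l=k+1$ in the last sum and recognizing the first three terms as the $l=1$, $l=0$ and $l=2$ contributions of the same pattern (using $\cos0=1$ and $\cos((0-1)\theta_j)=\cos\theta_j$), I would rewrite
\[
\lambda_j=2\sum_{l=0}^{n}g_l^{(\alpha)}\cos\bigl((l-1)\theta_j\bigr).
\]
This identity is the pivotal reformulation, since it exposes that the only negative weight, $g_1^{(\alpha)}=-\alpha$, is attached to the argument $\cos(0)=1$, where no loss occurs in the estimate below.

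The key estimate is then a termwise bound exploiting Lemma \ref{g-property}. For $l=1$ the term equals $g_1^{(\alpha)}$ exactly, while for $l=0$ and $l\ge2$ we have $g_l^{(\alpha)}>0$ and $\cos((l-1)\theta_j)\le1$, so that $g_l^{(\alpha)}\cos((l-1)\theta_j)\le g_l^{(\alpha)}$ holds for \emph{every} index $l$. Summing over $l$ yields
\[
\lambda_j\le 2\sum_{l=0}^{n}g_l^{(\alpha)}<0,
\]
where the final strict inequality is precisely the partial-sum property $\sum_{l=0}^{n}g_l^{(\alpha)}<0$ ($n\ge1$) from Lemma \ref{g-property}. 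Hence every eigenvalue of $\tau(G_n^{(\alpha)})$ is bounded above by the negative constant $2\sum_{l=0}^{n}g_l^{(\alpha)}$, so $\tau(G_n^{(\alpha)})$ is symmetric negative definite, and multiplication by $c_x>0$ (respectively $c_y>0$) preserves this, completing the proof.

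The main point requiring care is not an analytic estimate but the setup: correctly identifying the first-column entries of $G_n^{(\alpha)}$, invoking the sine-transform diagonalization to obtain the eigenvalue formula for the Hankel-corrected $\tau$ matrix, and carrying out the reindexing cleanly. Once the eigenvalue is written as $2\sum_{l=0}^{n}g_l^{(\alpha)}\cos((l-1)\theta_j)$, the sign follows from a one-line termwise comparison, so — unlike spectral arguments that must control an entire generating symbol across $(0,\pi)$ — here the truncation causes no difficulty because the single negative coefficient sits exactly at the argument where the cosine equals one.
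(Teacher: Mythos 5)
Your proof is correct: the identification of the first-column entries of $G_n^{(\alpha)}$, the eigenvalue formula $\lambda_j=t_0+2\sum_{k=1}^{n-1}t_k\cos(k\theta_j)$ for the Hankel-corrected $\tau$ matrix, the reindexing to $2\sum_{l=0}^{n}g_l^{(\alpha)}\cos((l-1)\theta_j)$, and the termwise bound via Lemma \ref{g-property} all check out, and the strict negativity follows from $\sum_{l=0}^{n}g_l^{(\alpha)}<0$. The paper itself gives no proof of this lemma --- it only cites \cite{HNS} --- and your argument is exactly the standard one used there, so there is nothing substantive to contrast.
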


In light of the results shown in Lemma \ref{A_xND}, we derive that all the eigenvalues of $\tau(A_x)$ and $\tau(A_y)$ are less than $0$ and hence we have the following lemma.

\begin{lemma}\label{P_spd}
The matrix $P$ defined in (\ref{p1}) is positive definite and so $P$ is invertible.
\end{lemma}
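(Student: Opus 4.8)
The plan is to exploit the additive (Kronecker-sum) structure of $\tau_1(A)$ together with the negative definiteness supplied by Lemma \ref{A_xND}, and then to express $P$ as the sum of a positive definite matrix and a positive semidefinite one. First I would observe that $P$ is symmetric: the matrices $\tau(A_x)$ and $\tau(A_y)$ are symmetric (being diagonalized by the symmetric orthogonal sine transform with real eigenvalues), so each Kronecker term $I_{n_2}\otimes\tau(A_x)$ and $\tau(A_y)\otimes I_{n_1}$ is symmetric, whence $\tau_1(A)$ is symmetric; since $I$ and $D$ are symmetric as well, so is $P$. This reduces the claim to showing that $x^{\tT}Px>0$ for every nonzero real vector $x$.

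The key step is to establish that $\tau_1(A)$ is negative definite. By Lemma \ref{A_xND}, the eigenvalues $\{\mu_i\}_{i=1}^{n_1}$ of $\tau(A_x)$ and $\{\nu_j\}_{j=1}^{n_2}$ of $\tau(A_y)$ are all strictly negative. Using the standard spectral property of a Kronecker sum, the eigenvalues of $\tau_1(A)=I_{n_2}\otimes\tau(A_x)+\tau(A_y)\otimes I_{n_1}$ are exactly the pairwise sums $\mu_i+\nu_j$, each of which is strictly negative. Hence $\tau_1(A)$ is symmetric negative definite, and consequently $I-\tau_1(A)$ is symmetric positive definite, its eigenvalues being $1-\mu_i-\nu_j>1$.

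Finally, since $D={\rm diag}(d_{i,j})$ has nonnegative diagonal entries (each $d_{i,j}$ equals $0$ or $\Delta t/\eta>0$), $D$ is positive semidefinite, so $x^{\tT}Dx\ge0$ for all $x$. Adding the two contributions gives $x^{\tT}Px=x^{\tT}(I-\tau_1(A))x+x^{\tT}Dx>0$ for every nonzero $x$, proving that $P$ is positive definite; invertibility then follows immediately, since a positive definite matrix has strictly positive eigenvalues and is therefore nonsingular. I do not expect a serious obstacle here: the only point requiring care is the spectral description of the Kronecker sum, which relies on $\tau(A_x)$ and $\tau(A_y)$ being simultaneously diagonalizable through the sine transform, and this is precisely what the $\tau$-structure guarantees.
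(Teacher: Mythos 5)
Your proof is correct and follows essentially the same route as the paper: both diagonalize the Kronecker sum $\tau_1(A)$ via the sine transform (so its eigenvalues are the pairwise sums $\mu_i+\nu_j<0$, making $I-\tau_1(A)$ positive definite) and then add the positive semidefinite diagonal matrix $D$ to conclude. Your write-up is slightly more explicit about the symmetry of $P$ and the nonnegativity of $D$, but the argument is the same.
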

\begin{proof}
Let $\Lambda_x$ and $\Lambda_y$ be the diagonal matrices including all the eigenvalues of $\tau(A_x)$ and $\tau(A_y)$, respectively. Then, we have
$$
I-\tau_1(A)=(S_{n_2}\otimes S_{n_1})(I-I_{n_2}\otimes\Lambda_x-\Lambda_y\otimes I_{n_1})(S_{n_2}\otimes S_{n_1}).
$$
By the conclusion stemming from Lemma \ref{A_xND}, we deduce that all the eigenvalues of $I-\tau_1(A)$ are positive and hence $I-\tau_1(A)$ is positive definite. Therefore, the matrix $P\geq I-\tau_1(A)$ is positive definite and so $P$ is invertible. The proof is complete.
\end{proof}

Evoking the results from Lemma \ref{P_spd}, the invertibility of the preconditioner $\hat P$ is determined in the following lemma.
\begin{lemma}
The preconditioner $\hat P$ defined in \eqref{preconditioner} is invertible.
\end{lemma}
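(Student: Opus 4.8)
The plan is to prove invertibility of $\hat P$ by showing that the matrix $\hat P^{-1}$, which is given explicitly in \eqref{preconditioner}, is nonsingular; equivalently, I will show that $\hat P^{-1}v=0$ forces $v=0$. The structural fact I will exploit is that $\Phi_d$ is a diagonal $0$--$1$ matrix, so $\Phi_d$ and $I-\Phi_d$ are complementary orthogonal projections satisfying $\Phi_d^2=\Phi_d$, $(I-\Phi_d)^2=I-\Phi_d$ and $\Phi_d(I-\Phi_d)=0$. Moreover, any vector in the range of $\Phi_d$ is orthogonal to any vector in the range of $I-\Phi_d$, since their supports (the grid points in $\bar\Omega\setminus\Omega$ and in $\Omega$, respectively) are disjoint.

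First I would left-multiply the identity $\hat P^{-1}v=0$ separately by $\Phi_d$ and by $I-\Phi_d$. Using the projection relations, this decouples the two summands of \eqref{preconditioner} and yields $(I-\Phi_d)(I-\tau_1(A))^{-1}v=0$ together with $\Phi_d\big((1+\tfrac{\Delta t}{\eta})I-\tau_1(A)\big)^{-1}v=0$. Writing $w_1=(I-\tau_1(A))^{-1}v$ and $w_2=\big((1+\tfrac{\Delta t}{\eta})I-\tau_1(A)\big)^{-1}v$, these two relations say precisely that $w_1$ is supported on $\bar\Omega\setminus\Omega$ (that is, $\Phi_d w_1=w_1$) and that $w_2$ is supported on $\Omega$ (that is, $(I-\Phi_d)w_2=w_2$); in particular $\langle w_1,w_2\rangle=0$. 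By the definitions of $w_1$ and $w_2$ we also have the two representations $v=(I-\tau_1(A))w_1$ and $v=\big((1+\tfrac{\Delta t}{\eta})I-\tau_1(A)\big)w_2$.

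Next I would subtract the two representations of $v$ to obtain the clean identity $(I-\tau_1(A))(w_1-w_2)=\tfrac{\Delta t}{\eta}\,w_2$. Pairing this with $w_1-w_2$ and using $\langle w_1,w_2\rangle=0$ gives
\[
\big\langle (I-\tau_1(A))(w_1-w_2),\,w_1-w_2\big\rangle=-\frac{\Delta t}{\eta}\,\|w_2\|_2^2 .
\]
Here the left-hand side is nonnegative because $I-\tau_1(A)$ is positive definite (this is exactly what is established inside the proof of Lemma \ref{P_spd}, via Lemma \ref{A_xND}), whereas the right-hand side is nonpositive because $\Delta t/\eta>0$. Hence both sides must vanish: positive definiteness forces $w_1=w_2$, and the vanishing of the right-hand side forces $w_2=0$. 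Therefore $w_1=w_2=0$, and consequently $v=(I-\tau_1(A))w_1=0$, which shows that $\hat P^{-1}$ is injective and so $\hat P$ is invertible.

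The only genuine obstacle to anticipate is that $\hat P^{-1}$ is not symmetric, since each summand in \eqref{preconditioner} is a product of an orthogonal projection with a symmetric positive definite matrix; one therefore cannot read off invertibility from a definiteness property directly. The device that circumvents this is the orthogonal splitting induced by $\Phi_d$ combined with the sign of the penalty ratio $\Delta t/\eta$: it is the disjoint-support orthogonality $\langle w_1,w_2\rangle=0$ that cancels the cross terms and produces the sign-definite identity above, after which everything follows from the positive definiteness of $I-\tau_1(A)$ already secured in Lemma \ref{P_spd}.
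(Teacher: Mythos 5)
Your proof is correct, but it takes a genuinely different route from the paper's. The paper's argument is purely algebraic: it right-multiplies $\hat P^{-1}$ by the invertible product $(I-\tau_1(A))\bigl((1+\Delta t/\eta)I-\tau_1(A)\bigr)$ and observes that, because the two factors commute with each other and cancel against the inverses in \eqref{preconditioner}, the result collapses to $I-\tau_1(A)+\frac{\Delta t}{\eta}(I-\Phi_d)$, which is symmetric positive definite (being the sum of the positive definite $I-\tau_1(A)$ and a positive semidefinite diagonal term); invertibility of $\hat P^{-1}$ then follows immediately. You instead prove injectivity of $\hat P^{-1}$ directly, using the complementary-projection structure of $\Phi_d$ to decouple the two summands, the disjoint-support orthogonality $\langle w_1,w_2\rangle=0$, and a sign argument on the identity $(I-\tau_1(A))(w_1-w_2)=\frac{\Delta t}{\eta}w_2$. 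Both proofs rest on the same two ingredients --- positive definiteness of $I-\tau_1(A)$ (from Lemma \ref{P_spd} via Lemma \ref{A_xND}) and positivity of $\Delta t/\eta$ --- and both are sound. The paper's version is shorter once one spots the right multiplier, and it yields the explicit factorization $\hat P = (I-\tau_1(A))\bigl((1+\Delta t/\eta)I-\tau_1(A)\bigr)\bigl(I-\tau_1(A)+\frac{\Delta t}{\eta}(I-\Phi_d)\bigr)^{-1}$ as a byproduct; your version is more transparent about \emph{why} the two summands cannot conspire to produce a nontrivial kernel, namely the orthogonality of their ranges, and requires no clever guess of a multiplier. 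As a minor point of hygiene, you should state explicitly (as the paper does) that $(1+\Delta t/\eta)I-\tau_1(A)$ is positive definite, so that $w_2$ is well defined; this follows from $(1+\Delta t/\eta)I-\tau_1(A)\geq I-\tau_1(A)$.
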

\begin{proof}
As{\small
\begin{align*}
&\hat P^{-1}(I-\tau_1(A))\left((1+\Delta t/\eta)I-\tau_1(A)\right)\\
=&(I-\Phi_d)\left((1+\Delta t/\eta)I-\tau_1(A)\right)+\Phi_d\left((1+\Delta t/\eta)I-\tau_1(A)\right)^{-1}(I-\tau_1(A))\left((1+\Delta t/\eta)I-\tau_1(A)\right)\\
=&(I-\Phi_d)\left((1+\Delta t/\eta)I-\tau_1(A)\right)+\Phi_d(I-\tau_1(A)) \\
=&I-\tau_1(A)+\Delta t/\eta(I-\Phi_d)\\
\geq& I-\tau_1(A),
\end{align*}}
we certify that $\hat P^{-1}$ is invertible owing to the fact that both $I-\tau_1(A)$ and $(1+\Delta t/\eta)I-\tau_1(A)$ are positive definite, from which we confirm that $\hat P$ is invertible.
\end{proof}

 Next, we focus on the spectrum of the matrix $P^{-1}M$. Some conclusions are proposed for our later investigation.

\begin{lemma}\label{eigenvalue_A}
{\rm (See \cite{HNS})} The matrix $A$ is a block Toeplitz with Toeplitz block matrix defined in \eqref{toeplitz_A}. $\tau_1(A)$ is the corresponding block Toeplitz with $\tau$ block matrix defined in \eqref{tau_A}. Then, the spectrum of $\tau_1(A)^{-1}A$ are uniformly bounded in the open interval $(1/2,3/2)$.
\end{lemma}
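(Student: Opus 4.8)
The plan is to recast the spectral statement as a two-sided Loewner bound and then split it into a one-dimensional problem through the Kronecker-sum structure, leaving a single genuinely hard one-dimensional estimate. By Corollary \ref{corollary} the matrix $A$ is symmetric negative definite, and by Lemma \ref{A_xND} the blocks $\tau(A_x),\tau(A_y)$ are negative definite, so $\tau_1(A)=I_{n_2}\otimes\tau(A_x)+\tau(A_y)\otimes I_{n_1}$ is negative definite as well. Setting $B=-A\succ0$ and $C=-\tau_1(A)\succ0$, we have $\tau_1(A)^{-1}A=C^{-1}B$, whose spectrum is exactly the set of generalized eigenvalues of the symmetric positive definite pencil $(B,C)$; these are real and positive. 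Hence the claim is equivalent to the sandwich
\[
\tfrac12\,C \prec B \prec \tfrac32\,C ,
\]
and this is what I would aim to establish.

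The first reduction is the lifting from one dimension to two. I would first prove the one-dimensional analogues, namely that the generalized eigenvalues of $(-A_x,-\tau(A_x))$ and of $(-A_y,-\tau(A_y))$ all lie in $(1/2,3/2)$, i.e. $\tfrac12(-\tau(A_x))\prec -A_x\prec\tfrac32(-\tau(A_x))$ and similarly in the $y$-direction. Granting these, the two-dimensional sandwich is a purely algebraic consequence of the additivity of the Loewner order under Kronecker sums: since $X\succ0$ implies both $I_{n_2}\otimes X\succ0$ and $X\otimes I_{n_1}\succ0$, tensoring the $x$-inequality on the left by $I_{n_2}$ and the $y$-inequality on the right by $I_{n_1}$ preserves strict positivity, and adding the two preserves strict two-sided bounds. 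Recalling the definitions \eqref{toeplitz_A} and \eqref{tau_A}, this yields precisely $\tfrac12 C\prec B\prec\tfrac32 C$. This step is routine once the one-dimensional bounds are available.

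The one-dimensional core is the main obstacle. Because $A_x=c_xG_{n_1}^{(\alpha_1)}$ with $c_x>0$, the positive scalar cancels and it suffices to show that the eigenvalues of $\tau(G_n^{(\alpha)})^{-1}G_n^{(\alpha)}$ lie in $(1/2,3/2)$. Writing $G_n^{(\alpha)}=\tau(G_n^{(\alpha)})+H_n$ with $H_n$ the Hankel correction used to build the $\tau$ matrix, and putting $P=-\tau(G_n^{(\alpha)})\succ0$, the target becomes the operator-norm bound $\|P^{-1/2}H_nP^{-1/2}\|_2<\tfrac12$, i.e. $|v^{\tT}H_nv|<\tfrac12\,v^{\tT}Pv$ for every nonzero $v$. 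To handle the right-hand side I would use that $P$ is diagonalized by the sine transform, its eigenvalues being the nonnegative generating function
\[
f(\theta)=-\Big(2g_1^{(\alpha)}+2(g_0^{(\alpha)}+g_2^{(\alpha)})\cos\theta+2\sum_{j\ge2}g_{j+1}^{(\alpha)}\cos(j\theta)\Big)
\]
sampled at the nodes $\theta_k=k\pi/(n+1)$, together with the sign and monotonicity data $g_1^{(\alpha)}<0<\cdots<g_3^{(\alpha)}<g_2^{(\alpha)}$ and $\sum_l g_l^{(\alpha)}=0$ from Lemma \ref{g-property}. These force $f\ge0$ and, in particular, $f(0)=2\sum_{l\ge0}g_l^{(\alpha)}=0$, pinning down the behaviour of the symbol at the origin.

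I expect the hard part to be exactly this vanishing of $f$ at $\theta=0$: for $1<\alpha<2$ one has $f(\theta)\to0$ as $\theta\to0$, so the smallest eigenvalue of $P$ decays with $n$, and the crude estimate $\|H_n\|_2/\lambda_{\min}(P)$ blows up and is useless. The comparison $|v^{\tT}H_nv|<\tfrac12\,v^{\tT}Pv$ must therefore be made mode-by-mode rather than through any global norm: one has to express $v^{\tT}H_nv$ in the sine basis and match the decay of the Hankel antidiagonals $g_{l+1}^{(\alpha)}$ against the local size of the sampled symbol $f(\theta_k)$, showing that on the low-frequency modes—where $P$ is smallest—the Hankel quadratic form is correspondingly small, uniformly in $n$ and in $\alpha\in(1,2)$. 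An alternative I would keep in reserve is to invoke a spectral-equivalence theorem for $\tau$ preconditioners of Toeplitz sequences generated by a nonnegative symbol, from which the sharp interval $(1/2,3/2)$ would follow once the symbol $f$ is identified; but the self-contained mode-by-mode estimate above is where the real work lies.
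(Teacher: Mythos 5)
Your reductions are sound and, for what it is worth, the paper itself offers no proof of this lemma at all: it is imported wholesale from \cite{HNS}, so the only meaningful comparison is with the argument there, which likewise passes from the two-dimensional Kronecker-sum pencil to the one-dimensional pencils $\left(-G_{n}^{(\alpha)},\,-\tau(G_{n}^{(\alpha)})\right)$ and then to a two-sided bound on the Hankel correction. Your preliminary steps are all correct: the equivalence of ``spectrum of $\tau_1(A)^{-1}A$ contained in $(1/2,3/2)$'' with the Loewner sandwich $\frac{1}{2}C\prec B\prec\frac{3}{2}C$, the preservation of that sandwich under $I\otimes(\cdot)$ and $(\cdot)\otimes I$ followed by addition, the cancellation of the positive scalars $c_x,c_y$, and the reformulation of the one-dimensional claim as $|v^{\tT}H_nv|<\frac{1}{2}\,v^{\tT}\bigl(-\tau(G_n^{(\alpha)})\bigr)v$.

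However, the proposal stops exactly where the lemma actually lives. The estimate $|v^{\tT}H_nv|<\frac{1}{2}\,v^{\tT}\bigl(-\tau(G_n^{(\alpha)})\bigr)v$ is identified as ``the real work'' but no argument for it is supplied, and your own (correct) observation that the truncated symbol vanishes at $\theta=0$ shows why nothing generic will close the gap: $\lambda_{\min}\bigl(-\tau(G_n^{(\alpha)})\bigr)\to 0$ as $n\to\infty$ while $\|H_n\|_2$ does not, so no operator-norm comparison or perturbation bound can work, and the constant $\frac{1}{2}$ is sharp in a way that a soft asymptotic or compactness argument cannot deliver uniformly in $n$ and in $\alpha\in(1,2)$. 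What is missing is the structural input that \cite{HNS} actually exploits: the sign, monotonicity and summability data of Lemma \ref{g-property} ($g_2^{(\alpha)}>g_3^{(\alpha)}>\cdots>0$, $\sum_{l}g_l^{(\alpha)}=0$, negative partial sums) must be fed into an explicit decomposition of the Hankel correction --- for instance into elementary one-antidiagonal Hankel pieces, each dominated in the Loewner order by the matching Toeplitz/$\tau$ piece after summation by parts --- so that the domination is achieved mode by mode and survives the degeneracy of the symbol at the origin. Your fallback of citing a general spectral-equivalence theorem for $\tau$ preconditioners of Toeplitz sequences with nonnegative symbol does not rescue the argument either: such results yield equivalence constants depending on the symbol, not the specific interval $(1/2,3/2)$ claimed here. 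As it stands, the proposal is a correct reduction together with an accurate diagnosis of the difficulty, but the statement itself remains unproved.
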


Based on the results shown in Lemma \ref{eigenvalue_A}, we have the following results.


\begin{lemma}\label{spectrum-BM}
The spectrum of the matrix $P^{-1}M$ are uniformly bounded below by $\frac{1}{2}$ and bounded above by $\frac{3}{2}$.
\end{lemma}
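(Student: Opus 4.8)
The plan is to recast the eigenvalue bound as a bound on a generalized Rayleigh quotient. Since $M=I-A+D$ is symmetric positive definite by Lemma~\ref{M_PD} and $P=I-\tau_1(A)+D$ is symmetric positive definite by Lemma~\ref{P_spd}, the product $P^{-1}M$ is similar to the symmetric matrix $P^{-1/2}MP^{-1/2}$. Consequently its eigenvalues are real, and by the Courant--Fischer min--max principle each of them lies in the range of
\[
R(v)=\frac{v^{\tT}Mv}{v^{\tT}Pv},\qquad v\in\IR^{N}\setminus\{0\}.
\]
It thus suffices to show $\tfrac12<R(v)<\tfrac32$ for every nonzero $v$.

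First I would split the two quadratic forms. Put $a=v^{\tT}v>0$, $b=-v^{\tT}Av$, $c=-v^{\tT}\tau_1(A)v$ and $d=v^{\tT}Dv$. Corollary~\ref{corollary} (negative definiteness of $A$) together with Lemma~\ref{A_xND} (negative definiteness of $\tau(A_x)$ and $\tau(A_y)$, hence of $\tau_1(A)$) yields $b,c>0$, while $D\ge0$ gives $d\ge0$. In this notation
\[
R(v)=\frac{a+b+d}{a+c+d}.
\]
The decisive ingredient is Lemma~\ref{eigenvalue_A}: the spectrum of $\tau_1(A)^{-1}A$ lies in $(1/2,3/2)$, and since this ratio equals $b/c=\frac{-v^{\tT}Av}{-v^{\tT}\tau_1(A)v}$ as $v$ ranges over all nonzero vectors, we obtain $\tfrac12<\tfrac{b}{c}<\tfrac32$, i.e.\ $2b>c$ and $2b<3c$.

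The final step is purely algebraic. Writing $s=a+d>0$, the mediant identity $\frac{b+s}{c+s}-1=\frac{b-c}{c+s}$ shows that $R(v)=\frac{b+s}{c+s}$ always lies between $b/c$ and $1$; as both numbers belong to $(1/2,3/2)$, so does $R(v)$. Explicitly, the lower bound $R(v)>\tfrac12$ comes from $2(a+b+d)-(a+c+d)=a+2b+d-c>0$ using $2b>c$, and the upper bound $R(v)<\tfrac32$ comes from $3(a+c+d)-2(a+b+d)=a+3c+d-2b>0$ using $2b<3c$. I do not expect a serious difficulty: the only step needing care is the reduction in the first paragraph, where the reality of the spectrum of the nonsymmetric matrix $P^{-1}M$ and its identification with the Rayleigh-quotient range both rest on the positive definiteness of $P$ from Lemma~\ref{P_spd}; the remainder is the short computation driven by Lemma~\ref{eigenvalue_A}.
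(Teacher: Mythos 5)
Your proposal is correct and follows essentially the same route as the paper: both identify the eigenvalues of $P^{-1}M$ with the generalized Rayleigh quotient $\frac{v^{\tT}Mv}{v^{\tT}Pv}$, invoke Lemma~\ref{eigenvalue_A} to bound $\frac{v^{\tT}Av}{v^{\tT}\tau_1(A)v}$ in $(1/2,3/2)$, and conclude via the mediant-type inequality that adding the common positive term $v^{\tT}(I+D)v$ to numerator and denominator pulls the ratio toward $1$. Your write-up is in fact somewhat more careful than the paper's (explicitly justifying the reality of the spectrum via similarity to $P^{-1/2}MP^{-1/2}$ and spelling out the sign computations), but the underlying argument is the same.
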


\begin{proof}
Let $x\in\IR^{N}$ be an arbitrary vector. By the Rayleigh quotients theorem and the conclusion presented in Lemma \ref{eigenvalue_A}, it holds that
$$
\frac{1}{2}\leq \frac{x^*Ax}{x^*\tau_1(A)x}\leq \frac{3}{2}.
$$
It immediately follows that
$$
\min\limits_{x}\left\{1,\frac{x^*Ax}{x^*\tau_1(A)x}\right\}<\frac{x^*(I-A+D)x}{x^*(I-\tau_1(A)+D)x}=
\frac{x^*(I+D)x-x^*Ax}{x^*(I+D)x-x^*\tau_1(A)x}<\max\limits_{x}\left\{1,\frac{x^*Ax}{x^*\tau_1(A)x}\right\}.
$$
Therefore, we derive
$$
\lambda_{\min}(P^{-1}M)=\min\limits_{x}\frac{x^{*}(I-A+D)x}{x^*(I-\tau_1(A)+D)x}>\frac{1}{2}, \
\lambda_{\max}(P^{-1}M)=\max\limits_{x}\frac{x^{*}(I-A+D)x}{x^{*}(I-\tau_1(A)+D)x}<\frac{3}{2}.
$$
The proof is complete.
\end{proof}

This lemma implies that when apply the matrix $P$ as the preconditioner, the spectrum of the preconditioned matrix is uniformly bounded and hence the GMRES method converges linearly. From the theoretical point of view, the matrix $P$ is worth to be treated as the preconditioner. However, from the perspective of practical computation, computing the inverse of $P$ requires more computational cost and storage as mentioned before, which does not comply with our original intention.
Actually, in our practical operation, the matrix $\hat P$ is utilized to accelerate the convergence rate.
\begin{remark}
We wish to establish a result similar to the one for the spectral of $P^{-1}M$, which provides an upper bound and a lower bound. Unfortunately, it is helpless to prove a conclusion such as Lemma \ref{spectrum-BM}. Up to now, there is no any theoretical analysis for the spectrum of the preconditioned matrix $\hat P^{-1}M$. Nevertheless, it is remarkable to mention that such kind of preconditioner has been used to several model equations. The numerical results indicate that the preconditioner $\hat{P}$ is efficient and feasible.
Therefore, we continually make use of the preconditioner under the case without theoretical support.
\end{remark}

\section{Numerical results} \label{numerical results}
In this section, the numerical experiments are carried out to demonstrate the effectiveness of the proposed method. The GMRES method and the preconditioned GMRES method are applied to solve the linear system (\ref{matrix_vector}), respectively. Set the restart number be $20$ and the stopping criterion of those methods as
$$
\frac{\|r^{(k)}\|_2}{\|r^{(0)}\|_2}<10^{-8},
$$
where $r^{(k)}$ means the residual vector after $k$ iterations.
 The initial guess is chosen as
\begin{align*}
v_0=\left\{
\begin{array}{lc}
u^{0},\quad \qquad  & m=0, \\
u^{m+1}=2u^m-u^{m-1}, & m>0.
\end{array}
\right.
\end{align*}

To exhibit the performance of the proposed method, the ADI method and the preconditioned ADI method \cite{CLTA-ANM-2018} are implemented as comparisons. In the following tables, `PGMRES' and `PADI' represent the preconditioned GMRES method and peconditioned ADI method, respectively. `Iter' means the average number of iterations by those iterative methods. `CPU(s)' displays the total CPU time in seconds for solving the whole discretized system.
In addition, the `Error' denotes the infinite norm of the relative error between the exact solution and the numerical solution on the original area $\Omega$ as
$$
Error=\frac{\|u_e-u_\eta\|_{\infty}}{\|u_e\|_{\infty}},
$$
where $u_e$ is the exact solution and $u_\eta$ is the numerical solution. All numerical results are carried out by MATLAB R2017a on a dell PC with the configuration: Intel(R)
Core(TM)i7-8700 CPU @3.20 3.20GHz and 8 GB RAM.

\bigskip
\begin{example}\label{example}
{\rm (See \cite{CLTA-ANM-2018})} Consider the RSFDEs defined on the following elliptical domain:
\begin{equation*}
(x,y)\in{\Omega}=\{(x,y)|(x-a)^{2}/a^{2}+(y-b)^{2}/b^{2}\le 1\},
\end{equation*}
with the initial condition:
$$
u(x,y,t)=((x-a)^2/a^2+(y-b)^2/b^2-1)^2,\quad (x,y)\in {\Omega},
$$
and the zero Dirichlet boundary condition:
$$
u(x,y,t)=0,\quad (x,y)\in \partial \Omega.
$$
The exact solution is given by $u(x,y,t)=e^{-t}((x-a)^2/a^2+(y-b)^2/b^2-1)^2$ and the source term is depicted as
\begin{align*}
f(u,x,y,t)=&k_xc_{\alpha_1}e^{-t}a^4[h(\alpha_1,x-a+c_y,c_y)+h(\alpha_1,a+c_y-x,c_y)] \\
&+k_yc_{\alpha_2}e^{-t}b^4[h(\alpha_2,y-b+c_x,c_x)+h(\alpha_2,b+c_x-y,c_x)]-u(x,y,t),
\end{align*}
where\ {\small
$h(\alpha,s,d)=\frac{24s^{(4-\alpha)}}{\Gamma(5-\alpha)}-\frac{24ds^{(3-\alpha)}}{\Gamma(4-\alpha)}+
\frac{8d^{2}s^{(3-\alpha)}}{\Gamma(3-\alpha)}, c_y=a\sqrt{(1-(y-b)^2/b^2)}, c_x=b\sqrt{(1-(x-a)^2/a^2)}$}.
\end{example}

\begin{table}[t]
\begin{center}
\caption{Comparisons for solving Example 1 by the GMRES method, the PGMRES method, the ADI method and the PADI method for different coefficients.}\label{table1}
\def\temptablewidth{1\textwidth}
{\rule{\temptablewidth}{1pt}}
{\footnotesize
\begin{tabular*}{\temptablewidth}{@{\extracolsep{\fill}}cccccccccccc}
      &         &  \multicolumn{3}{c}{$GMRES$} & \multicolumn{2}{c}{$PGMRES$} & \multicolumn{3}{c}{$ADI$} & \multicolumn{2}{c}{$PADI$}\\
 \cline{3-5}\cline{6-7}\cline{8-10}\cline{11-12}
 $k_x$ & $n_1$  &Error  &Iter  &CPU(s)  &Iter  &  CPU(s) & Error &Iter & CPU(s)  & Iter & CPU(s)   \\ \hline
$10^{-4}$&$2^5$ &1.58e-2 & 3.06 & 0.05  & 1.03 & 0.05 &1.58e-2 &3.00&0.15   &3.00&0.17\\
         &$2^6$ &7.90e-3 &1.80 & 0.13   & 1.01 & 0.22 &7.90e-3 &3.00&0.77   &2.00&0.54\\
         &$2^7$ &3.90e-3 & 1.76& 0.57   & 1.00 & 1.47 &3.90e-3 &2.00&2.60   &2.00&2.99\\
         &$2^8$ &2.00e-3 & 1.75& 6.84   & 1.00 &15.34 &2.00e-3 &2.00&18.01  &2.00&20.38 \\
         &$2^9$ &9.77e-4 & 1.74& 62.29  & 1.00&131.57&9.77e-4 &2.00&114.07&2.00&138.08  \\
\hline
$10^{-2}$&$2^5$ &1.51e-3 & 5.50 & 0.06  & 2.03  &0.07  &1.51e-2 &7.00&0.15  &4.00&0.17 \\
         &$2^6$ &7.50e-3 & 4.08 & 0.21  & 2.02  &0.29  &7.50e-3 &7.00&0.78  &4.00&0.74  \\
         &$2^7$ &3.80e-3 & 3.78 & 0.92  & 1.04  &1.38  &3.80e-3 &7.00&4.24  &4.00&3.89 \\
         &$2^8$ &1.90e-3 & 2.00 & 7.39  & 1.02  &14.42 &1.90e-3 &6.00&27.97 &4.00&29.44 \\
         &$2^9$ &9.40e-4 & 2.23 & 71.65 & 1.01  &130.71&9.41e-4 &6.99&193.91&4.00&194.96 \\
\hline
1 & $2^5$ & 7.90e-3  &41.19 & 0.27  &6.06  &0.11  & 1.29e-2 & 41.94 & 0.59 &11.00&0.29\\
  & $2^6$ & 4.10e-3  &42.16 & 1.91  &5.34  &0.57  & 7.20e-3 & 53.00 & 3.58 &10.03&1.58\\
  & $2^7$ & 2.10e-3  &40.80 & 8.11  &4.23  &3.28  & 3.80e-3 & 63.00 & 23.50&10.00&8.10\\
  & $2^8$ & 1.10e-3  &34.62 & 57.11 &3.10  &25.58 & 2.00e-3 & 75.00&204.21 &10.00&53.10\\
  & $2^9$ & 5.45e-4  &22.04 &493.11 &2.53  &225.19& 9.95e-4 &88.00&1.52e+3 &10.00&364.16
\end{tabular*} {\rule{\temptablewidth}{1pt}}
}
\end{center}
\end{table}

\captionsetup{width=0.7\textwidth}
\begin{table}[t]       
\begin{center}
\caption{Comparisons for solving Example 1 by the PGMRES method with $\tau$-based preconditioner and PADI method with circulant preconditioner for $T=10$.}\label{table2}
\def\temptablewidth{0.7\textwidth}
{\rule{\temptablewidth}{1pt}}
{\footnotesize
\begin{tabular*}{\temptablewidth}{@{\extracolsep{\fill}}cccccccccccc}
        &  \multicolumn{3}{c}{$PGMRES$} & \multicolumn{3}{c}{$PADI$} \\
                  \cline{2-4}\cline{5-7}
$n_1$    &Error   &Iter  &CPU(s)  & Error  &Iter   & CPU(s)      \\ \hline
$2^5$    &7.90e-3 & 5.11 & 1.00   &1.35e-2 & 11.00 & 3.16     \\
$2^6$    &4.10e-3 & 5.03 & 5.66   &7.60e-3 & 10.01 & 16.54     \\
$2^7$    &2.10e-3 & 4.02 & 30.71  &4.00e-3 & 10.00 & 79.70      \\
$2^8$    &1.10e-3 & 3.01 & 254.71 &2.10e-3 & 10.00 &530.94     \\
$2^9$    &5.46e-4 & 2.58 & 2.17+3 &1.00e-3 & 10.00 &3.65e+3     \\
\hline
\end{tabular*} {\rule{\temptablewidth}{1pt}}
}
\end{center}
\end{table}

In this example, we extend $\Omega$ to be a square domain $\bar\Omega=(-a,a)\times(-b,b)$. Assume that the values of $\hat f$ and $\hat u_0$ on the extended region $\bar\Omega\setminus\Omega$ are both $0$. In the following tables, take $\alpha_1=1.4$ and $\alpha_2=1.7$,
 $a=2$ and $b=1$. Let $k_x=k_y$ and $n_1=n_2$. Table \ref{table1} shows the numerical results of Example \ref{example} with $T=1$, $m=n_1$, and $\eta=10^{-5}$. From this table, we see that all mentioned methods implement well when the diffusion coefficients $k_x$ and $k_y$ are small. In particular, for those cases with no preconditioners are still powerful and even superior than the cases with preconditioners from the consumed CPU times point of view.
The reason of this phenomenon is that the model equations become time direction dominant if the diffusion coefficients are small, which yields that the coefficient matrix is almost equivalent to the identity matrix. In this situation, the preconditioner seems superfluous and thus more computational cost and memory are required. Nevertheless, the GMRES method still works well compared with both the ADI method and the preconditioned ADI method from the perspective of the required iterations and CPU times.
Moreover, the merit of the preconditioner is shown under the cases where the coefficients become large.  We see that when $k_x=k_y=1$, only few iterations and CPU times are required for the preconditioned GMRES method to attain convergence.

On the other hand, the advantage of our proposed method is more evident provided that the time $T$ becomes large as shown in Table \ref{table2}, where we take $T=10$, $m=10n_1$, and $k_x=1$.
It is remarkable to notice that our method is more accurate than the ADI method from the extent of the numerical approximation. With the same matrix size, the error arising from the ADI method is nearly twice than the error generating from our method. With the case of same error, the CPU time required for the ADI method to converge is much greater than the CPU time required for the proposed method. In conclusion, the proposed method should be a good choice for handling convex domain problems and more suitable for general cases.

In addition, Table \ref{table3} lists the numerical solutions of the penalized equation defined in \eqref{M1} on the extended region $\bar\Omega\setminus\Omega$ for different parameter $\eta$. From Table \ref{table3}, we obverse that the solutions tend to $0$ as the parameter is closed to $0$ and the values of the solutions are rely on the value of the parameter, which is coincident with the fact that the solution of the penalized equation converges to the solution of the original equation. Therefore, we have demonstrated the effectiveness of the proposed method.

\captionsetup{width=0.7\textwidth}
\begin{table}[t]       
\begin{center}
\caption{Values of $\|u_{\eta}\|_{\infty}$ for
 $k_x=10^{-4}$ in the extended region $\bar\Omega\setminus\Omega$ with different penalization parameters $\eta=10^{-4}, 10^{-5}, 10^{-6}$.}\label{table3}
\def\temptablewidth{0.7\textwidth}
{\rule{\temptablewidth}{1pt}}
{\footnotesize
\begin{tabular*}{\temptablewidth}{@{\extracolsep{\fill}}cccc}
$n_1$    &$\eta=10^{-4}$   &$\eta=10^{-5}$  &$\eta=10^{-6}$        \\ \hline
$2^5$    &6.0224e-9      & 6.0218e-10   &6.0218e-11          \\
$2^6$    &7.0861e-9      & 7.0855e-10   &7.0854e-11          \\
$2^7$    &8.4002e-9      & 8.3996e-10   &8.3995e-11          \\
$2^8$    &8.3910e-9      & 8.3907e-10   &8.3907e-11          \\
$2^9$    &8.5486e-9      & 8.5487e-10   &8.5487e-11          \\
$2^{10}$ &8.8833e-9      & 8.8847e-10   &8.8848e-11          \\
\hline
\end{tabular*} {\rule{\temptablewidth}{1pt}}
}
\end{center}
\end{table}

\section{Concluding remarks} \label{conclusion}
In this paper, the two-dimensional Riesz space fractional diffusion equations defined in convex domains are considered. We employ the volume-penalized method to convert the convex domain to be a rectangular domain. After that, an implicit difference method is utilized to discretize the reformulate equations with penalized term, which generates a block Teoplitz with Toeplitz block matrix plus a diagonal matrix. The GMRES method is applied to solve the discrete linear system. In quest to accelerate the convergence rate, a preconditioner based on the sine transform matrix is constructed and the spectra of the preconditioned matrix are investigated.
The numerical results demonstrate that our proposed method is implementable and efficient.
In our future investigation, we devote to improving the convergent order of the difference scheme based on the penalization method. Besides, we pay attention to solve high dimensional Riesz fractional diffusion equations defined on irregular domains.

\bibliographystyle{unsrt}

\end{document}